\numberwithin{equation}{section}
\newtheorem{example}{Example}[section]
\newtheorem{assumption}[theorem]{Assumption}
 \newtheorem{remark}[theorem]{Remark}
\newcommand{\ba}{\begin{array}}
\newcommand{\ea}{\end{array}}
\newcommand{\bit}{\begin{itemize}}
\newcommand{\eit}{\end{itemize}}
\newcommand{\be}{\begin{equation}}
\newcommand{\ee}{\end{equation}}
\newcommand{\bee}{\begin{equation*}}
\newcommand{\eee}{\end{equation*}}
\newcommand{\bea}{\begin{eqnarray}}
\newcommand{\eea}{\end{eqnarray}}
\newcommand{\Rmn}[1]{\uppercase\expandafter{\romannumeral#1}}
\numberwithin{equation}{section}
\numberwithin{theorem}{section}
\title{Inexact Riemannian Gradient Descent Method for Nonconvex Optimization}
\author{
Juan Zhou\thanks{School of Mathematics and Computational Science, Xiangtan University, Xiangtan, 411105, China
(\email{juanzhou425@gmail.com}).}
\and Kangkang Deng\thanks{ Department of Mathematics,  National University of Defense Technology, Changsha, 410073,
China (\email{freedeng1208@gmail.com}).}
\and Hongxia Wang \thanks{Department of Mathematics,  National University of Defense Technology, Changsha, 410073,
China (\email{wanghongxia@nudt.edu.cn}).}
\and Zheng Peng \thanks{School of Mathematics and Computational Science, Xiangtan University, Xiangtan, 411105, China  (\email{pzheng@xtu.edu.cn}).}
}
\begin{document}

\maketitle

\begin{abstract}
Gradient descent methods are fundamental first-order optimization algorithms in both Euclidean spaces and Riemannian manifolds. However, the exact gradient is not readily available in many scenarios. This paper proposes a novel inexact Riemannian gradient descent algorithm for nonconvex problems, accompanied by a convergence guarantee. In particular, we establish two inexact gradient conditions on Riemannian manifolds for the first time, enabling precise gradient approximations. Our method demonstrates strong convergence results for both gradient sequences and function values. The global convergence with constructive convergence rates for the sequence of iterates is ensured under the Riemannian Kurdyka-\L ojasiewicz property. Furthermore, our algorithm encompasses two specific applications: Riemannian sharpness-aware minimization and Riemannian extragradient algorithm, both of which inherit the global convergence properties of the inexact gradient methods. Numerical experiments on low-rank matrix completion and principal component analysis problems validate the efficiency and practical relevance of the proposed approaches.

\end{abstract}
\begin{keywords}
inexact gradient conditions, Riemannian sharpness-aware minimization, Riemannian extragradient method, nonconvex optimization
\end{keywords}

\begin{AMS}
 65K05, 65K10, 90C05, 90C26, 90C30
\end{AMS}

\section{Introduction}\label{sec intro}
This paper focuses on the following smooth optimization problem on Riemannian manifolds:
\begin{equation}\label{prob}
    \min_{x \in \mathcal{M}} f(x),
\end{equation}
where $\mathcal{M}$ is a Riemannian submanifold embedded in $\mathbb{R}^n$, and $f: \mathcal{M} \rightarrow \mathbb{R}$ is a continuously differentiable ($\mathcal{C}^1$-smooth) and nonconvex function. One of the most classical and effective methods for solving \eqref{prob} is the Riemannian gradient descent (RGD) algorithm \cite{absil2008optimization,smith2014optimization}. Given an initial point $x_0\in \mathcal{M}$, the iterative procedure of the RGD is designed as follows:
\begin{equation}\label{eq:rgd}
x_{k+1}=\mathcal{R}_{x_k}(-t_k \mathrm{grad}f(x_k)),
\end{equation}
for all $k\in \mathbb{N}$, where $\mathcal{R}$ denotes the retraction operator on the manifold $\mathcal{M}$, $t_k\ge 0$ is the stepsize at the $k$-th iteration, and $\mathrm{grad} f$ is the Riemannian gradient of $f$, which will be defined in the next section. Due to its simplicity, the RGD method is widely applied to various optimization problems \cite{afsari2013convergence,wei2016guarantees,hou2020fast,huang2023gradient}. However, in many practical scenarios, the exact gradient $\mathrm{grad} f(x)$ may not be accessible, or deterministic errors can occur in the gradient computations. This compels us to develop inexact gradient-based algorithms. 

In the Euclidean setting, commonly used inexact gradient conditions are the unnormalized and normalized conditions \cite{pedregal2004introduction,carter1991global}, defined as follows:
\begin{align}
\|g - \nabla f(x)\| & \le \epsilon, \label{unnormalized} \\
\|g - \nabla f(x)\| & \le \nu \|\nabla f(x)\|,\label{normalized}
\end{align}
where $g$ is an approximation of $\nabla f(x)$, and $\epsilon, \nu \ge 0$ are parameters.   These two inexact conditions have broad applicability across various domains, including derivative-free optimization \cite{cartis2018global,berahas2022theoretical,khanh2023general}, finite difference approximation \cite{cartis2018global,paquette2020stochastic}, and more. Recently, \cite{khanh2023inexact} proposed an inexact gradient method under condition \eqref{unnormalized} for solving nonconvex smooth problems and provided a convergence analysis. Additionally, an inexact algorithm for nonsmooth convex problems has also been proposed by \cite{khanh2024new}. Although inexact algorithms have been developed in Euclidean spaces, no algorithm has yet been proposed for addressing problem \eqref{prob} under an inexact gradient oracle. The purpose of this paper is to generalize those two inexact conditions to Riemannian manifolds, propose a unified inexact gradient algorithmic framework under those conditions, and provide a comprehensive convergence analysis.

A typical example of gradient estimation on Riemannian manifolds involves using standard finite differences, linear interpolation, or other approximation techniques, as studied in derivative-free optimization methods \cite{li2023stochastic}. 

\begin{example}[zeroth-order optimization]\label{grad-free}
Gradient estimation on Riemannian manifolds can be performed using finite difference methods. Specifically, generate $u=Pu_o\in T_x\mathcal{M}$, where $u_0 \sim \mathcal{N}(0,I_n)$ in $\mathbb{R}^n$, and $P\in \mathbb{R}^{n\times n}$ is the orthogonal projection matrix onto $T_x\mathcal{M}$. The gradient estimation is then given by
$$
g_{\mu}(x)=\frac{f(\mathcal{R}_x(\mu u))-f(x)}{\mu}u=\frac{f(\mathcal{R}_x(\mu Pu_0))-f(x)}{\mu}Pu_o.
$$
It is important to note that $g_\mu (x)$ is a biased estimator of $\mathrm{grad} f(x)$. However, this method typically yields more stable approximations, as established in \cite{li2023stochastic}.  
\end{example}

Another intuitive example is the inexact gradient used in the Riemannian Sharpness-Aware Minimization (SAM) method \cite{yun2024riemannian}. 

\begin{example}[Riemannian SAM]\label{SAM}
In the context of inexact gradient conditions, the SAM method on Riemannian manifolds provides insights into convergence behavior. The approximate gradient 
is computed at the backward step, defined as
$$
x_k^{adv} =\mathcal{R}_{x_k}(\rho \frac{\mathrm{grad} f(x_k)}{\|\mathrm{grad} f(x_k)\|}),
$$
rather than at the current point $x_k$. This backward step is specifically designed to avoid minimizers with high sharpness, thereby enhancing generalization. As a result, this method facilitates average-iterate convergence analysis under a less aggressively decaying ascent learning rate, as discussed in \cite{yun2024riemannian}.
\end{example}

Motivated by the works of \cite{khanh2023inexact,khanh2024new,khanh2024globally}, we propose a unified algorithmic framework for solving problem \eqref{prob} under inexact gradient conditions. The main contributions of this paper are as follows:
\begin{itemize}
    \item We develop and rigorously justify a general framework for a novel inexact Riemannian gradient descent (IRGD) method and its standardized variants IRGDr. This framework allows us to establish strong convergence results, including the stationarity of accumulation points, the convergence of the sequence of gradients to the origin, the sequence of function values to the optimal value, and the sequence of the iterates to optimal solutions is ensured under the Riemannian Kurdyka-\L ojasiewicz (KL) property of the objective function, with convergence rates determined by the Riemannian KL exponent. To the best of our knowledge, this is the first IRGD method that provides guaranteed convergence results.
    
    \item Two specific applications within the IRGD framework are introduced: the Riemannian sharpness-aware minimization (RSAM) method under the unnormalized condition \eqref{unnormalized-r}, and the Riemannian extragradient (REG) method under the normalized condition \eqref{normalized-r}. These methods effectively apply the theoretical convergence analysis developed by IRGD, demonstrating the versatility and applicability of the framework in diverse optimization scenarios.
    
    \item Additionally, we provide empirical validation through numerical experiments on low-rank matrix completion (MC) and principal component analysis (PCA) problems. These experiments demonstrate the efficiency and practical applicability of the proposed inexact methods, including the REG method, highlighting their potential in optimization tasks.
\end{itemize}

The paper is organized as follows. Sect.~\ref{sec prelim} covers the preliminaries of the IRGD method. In Sect.~\ref{sec inexact RGD}, we introduce our unified algorithmic framework and present the convergence analysis under inexact gradient conditions. Applications of this framework are discussed and analyzed in Sect.~\ref{sec app}. Sect.~\ref{sec numerical} details the numerical experiments conducted on two Riemannian optimization problems to evaluate the performance of our proposed methods. Finally, Sect.~\ref{sec conclusion} offers concluding remarks and outlines potential future research directions.

\section{Preliminaries} \label{sec prelim}
The Riemannian concepts presented in this paper are consistent with the established Riemannian optimization literature \cite{absil2008optimization}. Let $\mathcal{M} \subset \mathbb{R}^n$ be a differentiable embedded submanifold, equipped with a smoothly varying inner product $\langle \cdot, \cdot \rangle_x: T_x\mathcal{M}\times T_x\mathcal{M}\rightarrow \mathbb{R}$ defined on the tangent space $\mathcal{M}$ at each point $x$. The norm of a vector $\xi_x\in T_x\mathcal{M}$ is then given by $\|\xi_x\|_x := \sqrt{\langle \xi_x, \xi_x\rangle_x}$.  The Riemannian gradient $\mathrm{grad} f(x) \in T_x\mathcal{M}$ of a smooth function $f$ at a point $x\in \mathcal{M}$ is the unique tangent vector satisfying $ \left< \mathrm{grad} f(x), \xi \right>_x = Df(x)[\xi]$, $ \forall \xi\in T_x\mathcal{M} ,$
where $Df(x)[\xi]$ represents the directional derivative along the direction $\xi_x$. 
As is well known, a point $\bar x$ is stationary point for a $\mathcal{C}^1$-smooth function $f\colon\mathcal{M}\rightarrow\mathbb{R}$ if $\mathrm{grad} f(\bar x)=0$. 

We next introduce the concepts of retraction and vector transport, which are defined as follows. A smooth mapping $R: T\mathcal{M} \rightarrow \mathcal{M}$ is called a retraction on a manifold $\mathcal{M}$ if its restriction at $x$, denoted as $\mathcal{R}_x: T_x\mathcal{M} \rightarrow \mathcal{M}$, satisfies
\begin{itemize}
    \item $\mathcal{R}_x(0_x) = x$ for all $x\in \mathcal{M}$, where $0_x$ denotes the zero element of $T_x\mathcal{M}$;
    \item the differential of $\mathcal{R}_x$ at $0_x$ is an
identity mapping on $T_x\mathcal{M}$, i.e, $D\mathcal{R}_x(0_x) = \mathrm{id}_{T_x\mathcal{M}}$.
\end{itemize}
A smooth mapping $T\mathcal{M}\oplus T\mathcal{M} \rightarrow T\mathcal{M}:(\eta_x,\xi_x)\mapsto \mathcal{T}_{\eta_x}(\xi_x)$
is called a vector transport on a manifold $\mathcal{M}$,
if it satisfies
\begin{itemize}
    \item $T_{\eta_x} \xi_x \in T_{\mathcal{R}_x(\eta_x)}\mathcal{M}$ for all $x\in \mathcal{M}$, and for all $\eta_x, \xi_x \in T_x\mathcal{M}$;
    \item $\mathcal{T}_{0_x}\xi_x = \xi_x$ for all $\xi_x\in T_x\mathcal{M}$;
    \item  $\mathcal{T}_{\eta_x}(a\xi_x + b\zeta_x)  = a \mathcal{T}_{\eta_x} \xi_x + b\mathcal{T}_{\eta_x} \zeta_x$ for all $a,b\in \mathbb{R}$, and for all $\eta_x, \xi_x, \zeta_x \in T_x\mathcal{M}$.
\end{itemize} 
A vector transport $\mathcal{T}$ is called isometric if it satisfies $\langle \mathcal{T}_{\eta_x}\xi_x, \mathcal{T}_{\eta_x}\zeta_x\rangle_{\mathcal{R}_x(\eta_x)}=\langle \xi_x, \zeta_x\rangle_x$, for any $\eta_x,\xi_x,\zeta_x\in T_x \mathcal{M}$.
The adjoint operator $\mathcal{T}^{\sharp}$ of a vector transport $\mathcal{T}$ is defined such that $\langle \xi_y,\mathcal{T}_{\eta_x} \zeta_x\rangle_y=\langle \mathcal{T}_{\eta_x}^{\sharp}\xi_y,\zeta_x\rangle_y$
for all $\eta_x, \zeta_x\in T_x\mathcal{M}$ and $\xi_y\in T_y\mathcal{M}$, where $y = \mathcal{R}_x(\eta_x)$. 
The inverse operator $\mathcal{T}^{-1}$ is defined by the condition $\mathcal{T}_{\eta_x}^{-1}\mathcal{T}_{\eta_x}=\mathrm{id}$ for all $\eta_x\in T_x\mathcal{M}$, where $\mathrm{id}$ denotes the identity operator.

The global convergence and convergence rates of various nonconvex algorithms benefit from the Riemannian KL property \cite{huang2022riemannian,bento2011convergence}. The following provides the definition.

\begin{definition} [Riemannian Kurdyka-\L ojasiewicz property] \rm \label{RKL}\rm A continuous smooth function $f:\mathcal{M}\rightarrow \mathbb{R}$ enjoys the Riemannian \textit{KL property} at $\bar x\in \mathrm{grad}f$ if and only if there exist $\eta\in (0,\infty]$, a neighborhood $U \subset \mathcal{M}$ of $\bar x$, and a desingularizing concave continuous function $\varphi:[0,\delta)\rightarrow[0,\infty)$ such that:
\begin{enumerate}[label=(\roman*)]
    \item $\varphi(0)=0$.
    \item $\varphi$ is $\mathcal{C}^1$-smooth on $(0,\delta)$.
    \item $\varphi^\prime>0$ on $(0,\delta)$.
    \item For all $x\in U$ with $f(\bar x)<f(x)<f(\bar x)+\delta$, we have 
    \begin{align}\label{KL 2}
    \varphi^\prime(f(x)-f(\bar x))\|\mathrm{grad} f(x)\|\ge 1.
    \end{align}
\end{enumerate}
\end{definition}

The following lemma establishes that if the Riemannian KL property holds at every single point within a compact set that shares the same function value, then there exists a single desingularizing function such that the Riemannian KL property holds uniformly for all points within that compact set. This generalization is also implicitly covered in the proofs presented in \cite{hosseini2015convergence,huang2022riemannian}.

\begin{lemma}\label{RKL:compact}
    Let $\bar \Omega$ be a compact set in $\mathcal{M}$ and let $h:\mathcal{M}\rightarrow (-\infty, \infty]$ be a continuous function. Assume that $h$ is a constant on $\bar \Omega$ and satisfies the Riemannian KL property at each point of $\bar \Omega$. Then, there exists $\bar \omega, \delta>0$ and a continuous concave function $\varphi:[0, \delta]\rightarrow [0, \infty)$ such that for all $\bar u$ in $\bar \Omega$ and all $u$ in the following intersection:
    $$\{u\in \mathcal{M}:\inf_{v\in \bar \Omega}\|u-v\|< \bar \omega\}\cap \{u\in \mathcal{M}:h(\bar u)<h(u)<h(\bar u)+\delta\},$$
    one has 
    $$\varphi^\prime(h(u)-h(\bar u))\|\mathrm{grad} h(u)\|\ge 1.$$
\end{lemma}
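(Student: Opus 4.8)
The plan is to run the classical ``uniformization'' argument for the Kurdyka--\L ojasiewicz inequality (as in Bolte--Sabach--Teboulle for the Euclidean case), adapted to the embedded submanifold $\mathcal{M}\subset\mathbb{R}^n$, where all distances are taken to be the ambient norm $\|\cdot\|$. Since $h$ is constant on $\bar\Omega$, write $c:=h(\bar u)$ for every $\bar u\in\bar\Omega$; this is what makes the slab $\{u:h(\bar u)<h(u)<h(\bar u)+\delta\}$ independent of the base point and lets a single desingularizing function work.

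\textbf{Step 1: pointwise data reduced to metric balls, then a finite cover.} For each $\bar u\in\bar\Omega$, the Riemannian KL property at $\bar u$ supplies $\delta_{\bar u}>0$, a neighborhood $U_{\bar u}\subset\mathcal{M}$ of $\bar u$, and a desingularizing function $\varphi_{\bar u}$ satisfying Definition~\ref{RKL}(i)--(iv). Because $U_{\bar u}$ is a neighborhood of $\bar u$ in the subspace topology of $\mathcal{M}$, pick $\epsilon_{\bar u}>0$ with $B_{\bar u}:=\{u\in\mathcal{M}:\|u-\bar u\|<\epsilon_{\bar u}\}\subset U_{\bar u}$. The balls $\{u\in\mathcal{M}:\|u-\bar u\|<\epsilon_{\bar u}/2\}$, $\bar u\in\bar\Omega$, form an open cover of the compact set $\bar\Omega$; extract a finite subcover with centers $\bar u_1,\dots,\bar u_p$ and write $\epsilon_i:=\epsilon_{\bar u_i}$, $U_i:=U_{\bar u_i}$, $\delta_i:=\delta_{\bar u_i}$, $\varphi_i:=\varphi_{\bar u_i}$. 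Set $\bar\omega:=\tfrac12\min_i\epsilon_i>0$ and $\delta:=\tfrac12\min_i\delta_i>0$. If $u\in\mathcal{M}$ has $\inf_{v\in\bar\Omega}\|u-v\|<\bar\omega$, choose $v\in\bar\Omega$ with $\|u-v\|<\bar\omega$ and then $i$ with $\|v-\bar u_i\|<\epsilon_i/2$; the triangle inequality gives $\|u-\bar u_i\|<\epsilon_i$, so $u\in B_{\bar u_i}\subset U_i$.

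\textbf{Step 2: build the common desingularizing function and verify the inequality.} Define $\varphi:[0,\delta]\to[0,\infty)$ by $\varphi(s):=\sum_{i=1}^p\varphi_i(s)$, which is legitimate since $\delta<\delta_i$ for every $i$. As a finite sum of concave continuous functions that vanish at $0$, $\varphi$ is concave and continuous on $[0,\delta]$, is $\mathcal{C}^1$ on $(0,\delta)$ with $\varphi(0)=0$, and has $\varphi'(s)=\sum_i\varphi_i'(s)>0$ there; in particular $\varphi'(s)\ge\varphi_i'(s)$ on $(0,\delta)$ for each $i$, because every $\varphi_j'>0$. Now take $\bar u\in\bar\Omega$ and $u$ in the stated intersection, so $\inf_{v\in\bar\Omega}\|u-v\|<\bar\omega$ and $c<h(u)<c+\delta$. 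By Step~1, $u\in U_i$ for some $i$; since $h(\bar u_i)=c$ we get $h(\bar u_i)<h(u)<h(\bar u_i)+\delta\le h(\bar u_i)+\delta_i$, so the pointwise KL inequality at $\bar u_i$ yields $\varphi_i'(h(u)-h(\bar u_i))\,\|\mathrm{grad}\,h(u)\|\ge1$. Finally $h(u)-h(\bar u_i)=h(u)-h(\bar u)\in(0,\delta)$ and $\varphi'\ge\varphi_i'$ there, hence $\varphi'(h(u)-h(\bar u))\,\|\mathrm{grad}\,h(u)\|\ge1$, as claimed.

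\textbf{Main obstacle.} There is no deep analytic difficulty here; the argument is essentially careful bookkeeping. The two points that genuinely need care are: (a) replacing the abstract neighborhoods $U_{\bar u}$ by ambient-metric balls so that compactness of $\bar\Omega$ converts a pointwise statement into a uniform tube of width $\bar\omega$; and (b) combining the $\varphi_i$ so that concavity is retained while still dominating each $\varphi_i'$ — this forces the use of the \emph{sum} rather than $\max_i\varphi_i$, since a maximum of concave functions need not be concave. The only other mild subtlety, continuity of $\varphi$ on the closed interval $[0,\delta]$, is handled automatically by choosing $\delta$ strictly below $\min_i\delta_i$.
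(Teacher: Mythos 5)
Your proof is correct. The paper itself gives no proof of this lemma --- it only remarks that the statement is ``implicitly covered'' in the cited references --- and your argument is precisely the standard uniformization scheme (a finite subcover of ambient-metric balls inside the pointwise KL neighborhoods, followed by summing the finitely many desingularizing functions to preserve concavity while dominating each $\varphi_i'$) that those references rely on, so it supplies exactly the missing details.
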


\section{Inexact Riemannian gradient descent methods} \label{sec inexact RGD}
In this section, we introduce a unified algorithmic framework for the IRGD method, featuring two inexact gradient conditions designed to solve problem \eqref{prob}. The IRGD techniques are defined as iterative optimization schemes, given by
\begin{equation}\label{eq:rcg-xk+1}
x_{k+1}=\mathcal{R}_{x_k}(-t_k g_k),
\end{equation}
where $t_k$ is a diminishing stepsize, and $g_k$ is an approximation of $\mathrm{grad} f(x_k)$ that satisfies
\begin{equation}\label{unnormalized-r}
\|g_k - \mathrm{grad} f(x_k)\| \le \epsilon_k,
\end{equation}
or
\begin{equation}\label{normalized-r}
\|g_k - \mathrm{grad} f(x_k)\| \le \nu \|\mathrm{grad} f(x_k)\|,
\end{equation}
where $\epsilon_k > 0$ and $\nu \ge 0$ is the relative error parameter. The main motivation for the construction algorithm is that by making the inexact gradient $g_k$, it avoids the situation where the gradient is not available, which ensures the convergence of the algorithm. 

Next, we recall some basic stepsize selections for the iterative procedure \eqref{eq:rcg-xk+1}. If the step size $t_k$ satisfies the Armijo rule, it guarantees the nonincreasing property of the sequence $\{f(x_k)\}$. However, the Armijo stepsize may be very small, leading to many iterations with only minor changes in the sequence. Although a constant step size can significantly simplify the iterative design, it does not generally guarantee the nonincreasing property of $\{f(x_k)\}$, resulting in inefficiency \cite{bertsekas1997nonlinear,nocedal1999numerical}. This observation motivated our approach, which is based on the analysis of a diminishing stepsize given by 
\begin{align}\label{diminishing}
   \sum_{k=1}^\infty t_k=\infty\;\text{ and }\;\sum_{k=1}^\infty t_k^2<\infty.
\end{align}
This step size also ensures that $t_k\downarrow 0$, which is satisfied for the cosine step size scheduler in each cycle \cite{loshchilov2016sgdr}, making it a more favorable approach.

Before giving our algorithms, the following key lemma provides a unified conclusion for the convergence analysis of our methods. 
\begin{lemma}\label{R-dist}\cite{boumal2019global}
   Assume that there exist a constant $\kappa>0$ such that 
    \begin{align*}
    \|\mathcal{R}_x(\eta)-x\|\le \kappa \|\eta\|,
    \end{align*}
    for all $x\in \mathcal{M}$ and $\eta\in T_x\mathcal{M}$.
\end{lemma}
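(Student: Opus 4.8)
The statement is, in effect, the claim that a \emph{uniform} constant $\kappa$ exists; this is the standard fact that holds, for instance, whenever $\mathcal{M}$ is compact (or, more generally, whenever the iterates stay in a fixed compact subset of $\mathcal{M}$ and the tangent steps are bounded, which is automatic here since $t_k\downarrow 0$). I would prove it in the compact case. The argument has two ingredients: the smoothness of the retraction, which yields a linear bound near the zero section at each point, and the compactness of the unit-tangent bundle, which turns that pointwise bound into a uniform one.

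First I would extract the infinitesimal behaviour of $\mathcal{R}$. Fix $x\in\mathcal{M}$ and regard $\mathcal{R}_x$ as a smooth map from the vector space $T_x\mathcal{M}$ into $\mathbb{R}^n$, so that $t\mapsto\mathcal{R}_x(t\eta)$ is a smooth curve in $\mathbb{R}^n$ on $[0,1]$ for each $\eta\in T_x\mathcal{M}$. Using $\mathcal{R}_x(0_x)=x$ and the fundamental theorem of calculus,
\begin{equation*}
\mathcal{R}_x(\eta)-x=\int_0^1 \frac{\mathrm{d}}{\mathrm{d}t}\,\mathcal{R}_x(t\eta)\,\mathrm{d}t=\int_0^1 D\mathcal{R}_x(t\eta)[\eta]\,\mathrm{d}t,
\end{equation*}
whence $\|\mathcal{R}_x(\eta)-x\|\le\|\eta\|\cdot\sup_{t\in[0,1]}\|D\mathcal{R}_x(t\eta)\|_{\mathrm{op}}$, where $\|\cdot\|_{\mathrm{op}}$ is the operator norm of a linear map out of $(T_x\mathcal{M},\|\cdot\|_x)$. (The retraction axiom $D\mathcal{R}_x(0_x)=\mathrm{id}_{T_x\mathcal{M}}$ shows this supremum is close to $1$ for $\eta$ near $0_x$, but I only need it to be finite and uniformly controlled.)

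Next I would make the bound uniform. The set $K=\{(x,\eta):x\in\mathcal{M},\ \eta\in T_x\mathcal{M},\ \|\eta\|_x\le1\}$ is a closed and bounded subset of the tangent bundle $T\mathcal{M}\subset\mathbb{R}^n\times\mathbb{R}^n$, hence compact when $\mathcal{M}$ is compact. The map $(x,\eta)\mapsto D\mathcal{R}_x(\eta)$ depends continuously on $(x,\eta)$ — e.g.\ compose it with the smooth ambient projection $P_x\colon\mathbb{R}^n\to T_x\mathcal{M}$ to get a genuine continuous $\mathbb{R}^{n\times n}$-valued map on $T\mathcal{M}$ — so $\kappa_0:=\sup_{(x,\eta)\in K}\|D\mathcal{R}_x(\eta)\|_{\mathrm{op}}<\infty$. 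Combined with the integral estimate this gives $\|\mathcal{R}_x(\eta)-x\|\le\kappa_0\|\eta\|$ whenever $\|\eta\|\le1$. For $\|\eta\|>1$ I would simply use that a compact $\mathcal{M}$ sits inside some ball $B(0,r)$, so $\|\mathcal{R}_x(\eta)-x\|\le2r\le2r\|\eta\|$; taking $\kappa=\max\{\kappa_0,2r\}$ yields the stated bound for all $x\in\mathcal{M}$ and $\eta\in T_x\mathcal{M}$.

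The only delicate point — the part that is not routine — is exactly the passage from pointwise to uniform: a Taylor expansion gives $\|\mathcal{R}_x(\eta)-x\|=\|\eta\|+o(\|\eta\|)$ at every $x$, but the threshold on $\|\eta\|$ below which the $o$-term is controlled a priori depends on $x$, and it is the compactness of $K$ (via continuity of $D\mathcal{R}$ there) that supplies a single threshold and a single constant. If one wishes to avoid the ambient-projection device, the same conclusion follows from a finite-subcover/Lebesgue-number argument applied to the cover of $K$ by local trivializations of $T\mathcal{M}$, in each of which $\mathcal{R}$ is an ordinary smooth map between Euclidean open sets.
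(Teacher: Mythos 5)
Your argument is correct, and it is worth noting up front that the paper itself does not prove this statement at all: despite being labelled a lemma, it is phrased as a hypothesis (``Assume that there exist a constant $\kappa>0$\dots'') and is simply imported from the cited reference, where it is likewise imposed as a standing assumption on the retraction. Your proof therefore supplies what the paper omits, and it does so by the standard route: the fundamental-theorem-of-calculus identity $\mathcal{R}_x(\eta)-x=\int_0^1 D\mathcal{R}_x(t\eta)[\eta]\,\mathrm{d}t$ reduces the claim to a bound on the operator norm of $D\mathcal{R}_x$ along the segment, and compactness of the unit-ball bundle $K$ together with continuity of $(x,\eta)\mapsto D\mathcal{R}_x(\eta)$ makes that bound uniform; the large-$\eta$ regime is handled trivially by the diameter of $\mathcal{M}$. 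You are also right to isolate the pointwise-to-uniform passage as the only non-routine step, and your two devices for formalizing continuity of the differential (ambient projection, or local trivializations plus a Lebesgue-number argument) are both legitimate. Two small caveats. First, your proof genuinely requires compactness (or that the iterates remain in a compact set on which $\mathcal{R}$ is defined on the whole tangent space); for a non-compact manifold the statement can fail and must indeed be assumed, which is presumably why the paper states it as it does. Your parenthetical that boundedness of the tangent steps is ``automatic here since $t_k\downarrow 0$'' is a little quick --- one also needs $\|g_k\|$ bounded, which in the paper's setting follows from the inexactness conditions together with boundedness of $\mathrm{grad}f$ on the relevant compact set, but it is not free. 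Second, for the paper's purposes the bound is only ever invoked for the specific tangent vectors $\eta=-t_kg_k$, so the restriction to a compact sublevel set is harmless; it would be cleaner, though, if either you or the paper stated the compactness hypothesis explicitly rather than leaving the lemma as an unconditional-looking claim.
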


\begin{lemma}\cite{khanh2024fundamental} \label{three sequences lemma}
   Let $\{\alpha_k\},\{\beta_k\}, \{\gamma_k\}$ be sequences of nonnegative numbers satisfying the conditions
   \begin{align}
       &\alpha_{k+1}-\alpha_k\le  \beta_k\alpha_k+ \gamma_k\;\text{ for sufficient large }\;k\in\mathbb{N},\label{a}\\
       &\{\beta_k\}\text{ is bounded}, \sum_{k=1}^\infty \beta_k=\infty,\; \sum_{k=1}^\infty \gamma_k<\infty,\;\mbox{ and }\;\sum_{k=1}^\infty \beta_k\alpha_k^2<\infty.\label{b}
   \end{align}
Then we have that $\alpha_k\rightarrow0$ as $k\to\infty$.
\end{lemma}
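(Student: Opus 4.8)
The plan is to reach the conclusion in two steps: first, that $\liminf_{k\to\infty}\alpha_k=0$, which is essentially immediate from the divergence of $\sum_k\beta_k$ paired with the summability $\sum_k\beta_k\alpha_k^2<\infty$; and second, that the full limit superior also vanishes, which is the substantive part and which I would handle by an ``upcrossing'' argument applied to the telescoped form of the recursion \eqref{a}.

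For the first step, suppose $\liminf_k\alpha_k=c>0$. Then $\alpha_k\ge c/2$ for all $k$ beyond some index $N$, so $\sum_{k\ge N}\beta_k\alpha_k^2\ge (c/2)^2\sum_{k\ge N}\beta_k=\infty$ because $\sum_k\beta_k=\infty$, contradicting $\sum_k\beta_k\alpha_k^2<\infty$; hence $\liminf_k\alpha_k=0$. For the second step, set $B:=\sup_k\beta_k<\infty$ and introduce the tails $\sigma_n:=\sum_{k\ge n}\beta_k\alpha_k^2$ and $\tau_n:=\sum_{k\ge n}\gamma_k$, both of which tend to $0$. Assume for contradiction that $\bar{\alpha}:=\limsup_k\alpha_k>0$ (the case $\bar{\alpha}=+\infty$ is treated identically). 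Fix levels $0<a<b<\bar{\alpha}$ with a genuine gap $b-(1+B)a>0$; for instance $b=\bar{\alpha}/2$, $a=\bar{\alpha}/(4(1+B))$ when $\bar{\alpha}<\infty$, and $b=1$, $a=1/(2(1+B))$ otherwise. Since $\liminf_k\alpha_k=0<a$ and $\limsup_k\alpha_k>b$, the sequence falls below $a$ and rises above $b$ infinitely often, so one can extract indices $j_\ell\to\infty$ and $m_\ell>j_\ell$ with $\alpha_{j_\ell}<a$, $\alpha_{m_\ell}\ge b$, and $\alpha_k\ge a$ for every $j_\ell<k<m_\ell$ (take $m_\ell$ to be an index with value at least $b$ and $j_\ell$ the largest index below it whose value is below $a$, choosing the $m_\ell$ to march to infinity).

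The core of the argument is to sum \eqref{a} over each block $[j_\ell,m_\ell-1]$, which is legitimate once $j_\ell$ exceeds the index from which \eqref{a} is assumed to hold:
\[
b-a\le\alpha_{m_\ell}-\alpha_{j_\ell}\le\beta_{j_\ell}\alpha_{j_\ell}+\sum_{k=j_\ell+1}^{m_\ell-1}\beta_k\alpha_k+\sum_{k=j_\ell}^{m_\ell-1}\gamma_k .
\]
I would then bound the first term by $Ba$ via $\beta_{j_\ell}\le B$ and $\alpha_{j_\ell}<a$; the middle sum by $\sigma_{j_\ell+1}/a$, since $\alpha_k\ge a$ on that range forces $\beta_k\alpha_k\le\beta_k\alpha_k^2/a$; and the last sum by $\tau_{j_\ell}$. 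Rearranging yields $b-(1+B)a\le\sigma_{j_\ell+1}/a+\tau_{j_\ell}$, and letting $\ell\to\infty$ drives the right-hand side to $0$ while the left side is a fixed positive constant — a contradiction. Therefore $\bar{\alpha}=0$, and since $\alpha_k\ge 0$ this gives $\alpha_k\to 0$.

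The one delicate point — and what dictates the choice of constants — is the ``boundary'' term $\beta_{j_\ell}\alpha_{j_\ell}$ produced at the left end of each block. Unlike the interior terms, it is not absorbed by $\sum_k\beta_k\alpha_k^2$, because at $j_\ell$ one only knows $\alpha_{j_\ell}$ lies below a fixed threshold, not that it is small in any summable sense, so this term need not vanish along the upcrossings. The remedy is to work with a built-in margin: pick $a$ small enough relative to $b$ that the guaranteed rise $b-a$ strictly exceeds the worst-case boundary cost $Ba$. It is exactly the boundedness of $\{\beta_k\}$ that keeps $Ba$ finite and makes this margin available; without it the argument breaks down.
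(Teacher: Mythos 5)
The paper states this lemma as a citation to \cite{khanh2024fundamental} and gives no proof of its own, so there is nothing internal to compare against; judged on its own, your argument is correct and complete. The two-step structure --- forcing $\liminf_k\alpha_k=0$ from $\sum_k\beta_k=\infty$ together with $\sum_k\beta_k\alpha_k^2<\infty$, then ruling out a positive $\limsup$ by telescoping \eqref{a} over upcrossing blocks, absorbing the interior terms via $\beta_k\alpha_k\le\beta_k\alpha_k^2/a$ and controlling the single boundary term by $Ba$ through the margin $b-(1+B)a>0$ --- is essentially the standard proof of this ``three sequences'' lemma in the cited reference, and your handling of the boundary term and of the fact that $j_\ell\to\infty$ is exactly the care the argument requires.
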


\begin{lemma}\cite{khanh2022inexact}\label{stationary point lemma}
Let $\{x_k\}$ and $\{\eta_k\}$ be sequences in $\mathcal{M}$ satisfying the condition
\begin{align}\label{series is finite}
\sum_{k=1}^\infty \|x_{k+1}-x_k\|\cdot\|\eta_k\|<\infty.
\end{align}
If $\bar x$ is an accumulation point of the sequence $\{x_k\}$ and $0$ is an accumulation points of the sequence $\{\eta_k\}$, then there exists an infinite set $J\subset\mathbb{N}$ such that we have
\begin{align}\label{relation dk xk 3}
x_k\overset{J}{\rightarrow}\bar x\;\mbox{ and }\;\eta_k\overset{J}{\rightarrow}0.
\end{align}
\end{lemma}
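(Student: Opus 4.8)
The plan is to argue by contradiction. Suppose there is no infinite set $J\subset\mathbb{N}$ along which $x_k\to\bar x$ and $\eta_k\to 0$ simultaneously. The first step is to upgrade this into a \emph{uniform separation} statement: there exist $\varepsilon_0>0$ and $K_0\in\mathbb{N}$ such that $\max\{\|x_k-\bar x\|,\|\eta_k\|\}\ge\varepsilon_0$ for every $k\ge K_0$. This follows by contraposition: if for each $m$ the set $\{k:\|x_k-\bar x\|<1/m \text{ and } \|\eta_k\|<1/m\}$ were infinite, a diagonal selection would produce exactly the forbidden subsequence.

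The second step exploits the two accumulation hypotheses against this separation. Since $\bar x$ is an accumulation point of $\{x_k\}$, the index set $A:=\{k\ge K_0:\|x_k-\bar x\|<\varepsilon_0/2\}$ is infinite; since $0$ is an accumulation point of $\{\eta_k\}$, the set $B:=\{k\ge K_0:\|\eta_k\|<\varepsilon_0/2\}$ is infinite. The separation then forces $A\cap B=\emptyset$, and more precisely $k\in A\Rightarrow\|\eta_k\|\ge\varepsilon_0$ and $k\in B\Rightarrow\|x_k-\bar x\|\ge\varepsilon_0$, since in each case the alternative would violate $\max\{\|x_k-\bar x\|,\|\eta_k\|\}\ge\varepsilon_0$.

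The third step is the core excursion argument. Choose inductively indices $a_i\in A$ and set $b_i:=\min\{j>a_i:j\in B\}$ (well defined since $B$ is infinite), always picking $a_{i+1}>b_i$, so that the blocks $[a_i,b_i)$ are pairwise disjoint. On each block one has $\|\eta_j\|\ge\varepsilon_0/2$ for all $a_i\le j<b_i$ (at $j=a_i$ because $a_i\in A$ gives $\|\eta_{a_i}\|\ge\varepsilon_0$, and for $a_i<j<b_i$ by minimality of $b_i$), while the telescoping/triangle inequality together with $a_i\in A$, $b_i\in B$ gives $\sum_{j=a_i}^{b_i-1}\|x_{j+1}-x_j\|\ge\|x_{b_i}-x_{a_i}\|\ge\|x_{b_i}-\bar x\|-\|x_{a_i}-\bar x\|\ge\varepsilon_0-\varepsilon_0/2=\varepsilon_0/2$. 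Hence each block contributes at least $\varepsilon_0^2/4$ to $\sum_k\|x_{k+1}-x_k\|\,\|\eta_k\|$, and summing over the infinitely many disjoint blocks contradicts \eqref{series is finite}.

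I expect the main obstacle to be the bookkeeping in the third step: one must arrange the blocks to be disjoint and at the same time guarantee that each carries a fixed positive amount of ``mass'' $\varepsilon_0^2/4$. The structural insight that makes this work is that, by the separation property, whenever $x_k$ sits near $\bar x$ the vector $\eta_k$ is bounded away from $0$; consequently any passage from an index in $A$ (``$x$ near $\bar x$'') to the next index in $B$ (``$\eta$ near $0$'') must move $x$ a macroscopic distance while $\|\eta_k\|$ stays at least $\varepsilon_0/2$ throughout, which is precisely the product structure needed to exploit the summability hypothesis.
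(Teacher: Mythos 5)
Your proof is correct: the uniform-separation step, the disjointness of $A$ and $B$, and the excursion argument showing each block $[a_i,b_i)$ contributes at least $\varepsilon_0^2/4$ to the sum all check out, and together they contradict \eqref{series is finite}. The paper states this lemma without proof (citing \cite{khanh2022inexact}), and your contradiction-by-excursion argument is essentially the same one used in that reference, so nothing further is needed.
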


\begin{remark}
    We use $\mathbb{N}:=\{1,2,\ldots\}$ signifies the collection of natural numbers. The symbol $x_k\xrightarrow{J}\bar x$ means that $x_k\to\bar x$ as $k\to\infty$ with $k\in J\subset\mathbb{N}$.
\end{remark}

\subsection{Unnormalized condition}\label{cond-I}
In this subsection, we present a general framework for our novel IRGD method, focusing on cases where the inexact gradient satisfies the unnormalized condition given by \eqref{unnormalized-r}. We also provide a detailed convergence analysis of the IRGD method.

\begin{algorithm}[H]
\caption{Inexact Riemannian Gradient Descent (IRGD) Methods }\label{Alg-RGD}
\begin{algorithmic}[1]
\STATE Choose some initial point $x_0\in\mathcal{M},$ sequence of errors $\{\epsilon_k\}\subset (0,\infty)$, and sequence of stepsizes $\{t_k\}\subset(0,\infty).$ For $k=1,2,\ldots,$ do the following
\STATE Set $x_{k+1}=\mathcal{R}_{x_k}(-t_k g_k)$ with $\|g_k-\mathrm{grad} f(x_k)\|\le \epsilon_k$, where $g_k\in T_{x_k} \mathcal{M}$.
\end{algorithmic}
\end{algorithm}

The algorithm proposed in this work is inspired by the inexact gradient descent methods studied in Euclidean space by \cite{khanh2023inexact, khanh2024new, khanh2024fundamental}. The latter methods consider relative errors in gradient calculation, while Algorithm \ref{Alg-RGD} uses absolute errors. This approach is particularly suitable for the constructions of RSAM, as established in Sect.~\ref{app:rsam}. The following assumption presents the Riemannian setting of $L$-smooth functions.

\begin{assumption}[$L$-Retraction Smoothness]\label{descent condition}
Assume that there exists a constant $L>0$ such that
\begin{align*}
f(\mathcal{R}_x(\eta))\le f(x)+\langle \mathrm{grad} f(x),\eta\rangle +\frac{L}{2}\|\eta\|^2,
\end{align*}
for all $x\in\mathcal{M}, \eta\in T_x \mathcal{M}$.
\end{assumption}
The following lemma immediately confirms that IRGD is a descent algorithm.
\begin{lemma}\label{descent lemma}
    Suppose that Assumption \ref{descent condition} holds. Let $\{x_k\}$ be generated by Algorithm~\ref{Alg-RGD} with stepsizes and errors satisfying the conditions
   \begin{align}\label{parameter general}
        \sum_{k=1}^\infty t_k=\infty,\;t_k\downarrow0, \sum_{k=1}^\infty t_k \epsilon_k<\infty,\;\limsup \epsilon_k<2.
    \end{align}
    Then there exists $K>0$ such that for $k>K$,
    $$
    f(\mathcal{R}_{x_k}(-t_kg_k))\le f(x_k) -c_1 t_k\|\mathrm{grad} f(x_k)\|^2+c_2t_k\epsilon_k.
    $$    
\end{lemma}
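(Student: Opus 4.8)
The plan is to apply $L$-retraction smoothness (Assumption~\ref{descent condition}) at the point $x_k$ to the tangent vector $\eta=-t_kg_k$, which gives
\begin{align*}
f(\mathcal{R}_{x_k}(-t_kg_k))\le f(x_k)-t_k\langle\mathrm{grad} f(x_k),g_k\rangle+\frac{Lt_k^2}{2}\|g_k\|^2 .
\end{align*}
I would then write $g_k=\mathrm{grad} f(x_k)+e_k$ with $\|e_k\|\le\epsilon_k$ (a restatement of \eqref{unnormalized-r}) and control the two error-dependent pieces by Cauchy--Schwarz: $\langle\mathrm{grad} f(x_k),g_k\rangle\ge\|\mathrm{grad} f(x_k)\|^2-\epsilon_k\|\mathrm{grad} f(x_k)\|$ for the linear term, and $\|g_k\|^2\le(\|\mathrm{grad} f(x_k)\|+\epsilon_k)^2$ for the quadratic term. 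Applying Young's inequality $2ab\le a^2+b^2$ to each cross term $\epsilon_k\|\mathrm{grad} f(x_k)\|$ that appears and collecting terms produces an estimate of the form
\begin{align*}
f(\mathcal{R}_{x_k}(-t_kg_k))\le f(x_k)-\Big(\tfrac{t_k}{2}-Lt_k^2\Big)\|\mathrm{grad} f(x_k)\|^2+\Big(\tfrac{t_k}{2}+Lt_k^2\Big)\epsilon_k^2 .
\end{align*}

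The remaining two reductions exploit the hypotheses in \eqref{parameter general}. Since $t_k\downarrow0$, there is $K_1$ such that $Lt_k\le\tfrac14$ for $k>K_1$, hence $\tfrac{t_k}{2}-Lt_k^2\ge\tfrac{t_k}{4}$ and $\tfrac{t_k}{2}+Lt_k^2\le t_k$; this is the step where the diminishing stepsize is used to absorb the $O(t_k^2)$ term quadratic in the gradient into the leading $-t_k\|\mathrm{grad} f(x_k)\|^2$ term. Since $\limsup_k\epsilon_k<2$, there is $K_2$ such that $\epsilon_k<2$ for $k>K_2$, so that $\epsilon_k^2<2\epsilon_k$ and the $\epsilon_k^2$ term can be replaced by a multiple of $\epsilon_k$. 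Taking $K=\max\{K_1,K_2\}$, $c_1=\tfrac14$ and $c_2=2$ then yields $f(\mathcal{R}_{x_k}(-t_kg_k))\le f(x_k)-c_1t_k\|\mathrm{grad} f(x_k)\|^2+c_2t_k\epsilon_k$ for all $k>K$, which is the claim.

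I do not expect a genuine obstacle here: the argument is the standard descent-lemma computation adapted to retractions, and the only care required is bookkeeping the threshold index $K$ and verifying that $c_1,c_2$ can be chosen independently of $k$ (they depend only on $L$). I would add the remark that of the four conditions in \eqref{parameter general} only $t_k\downarrow0$ and the boundedness implied by $\limsup\epsilon_k<2$ are actually invoked for this lemma, while the summability conditions $\sum_kt_k=\infty$ and $\sum_kt_k\epsilon_k<\infty$ become relevant only when the inequality is summed over $k$ in the subsequent convergence analysis. A minor flexibility is the choice of the Young parameter: the plain inequality already suffices, but carrying a free parameter $\delta>0$ would let one push $c_1$ arbitrarily close to $1$ at the cost of enlarging $c_2$.
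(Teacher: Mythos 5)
Your proof is correct and follows the same core route as the paper: apply Assumption~\ref{descent condition} with $\eta=-t_kg_k$, control the inner product by Cauchy--Schwarz, absorb the cross terms by an AM--GM/Young step, and choose a threshold $K$ using $t_k\downarrow0$ and $\limsup\epsilon_k<2$. The one substantive difference is the bookkeeping of the cross term: the paper uses $\epsilon_k\|\mathrm{grad} f(x_k)\|\le\tfrac{\epsilon_k}{2}(1+\|\mathrm{grad} f(x_k)\|^2)$, which keeps the additive remainder \emph{linear} in $\epsilon_k$ and forces the gradient coefficient to be $\tfrac{t_k}{2}(2-Lt_k-\epsilon_k+Lt_k\epsilon_k)$ --- this is exactly where $\limsup\epsilon_k<2$ is needed for $c_1>0$, and it yields $c_2\in(0,1)$. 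You instead use $\epsilon_k\|\mathrm{grad} f(x_k)\|\le\tfrac12\epsilon_k^2+\tfrac12\|\mathrm{grad} f(x_k)\|^2$, which decouples $c_1$ from $\epsilon_k$ (so, as you correctly observe, only boundedness of $\epsilon_k$ is genuinely needed for the lemma itself), and you recover an $O(\epsilon_k)$ remainder via $\epsilon_k^2<2\epsilon_k$, ending with $c_2=2$. This proves the lemma as stated, but note that the paper's later argument (Theorem~\ref{convergence iterates}) exploits $c_2\in(0,1)$ through the monotonicity of $\varphi'$ in the bound $\bigl(\varphi'(c_2\sum_{i\ge k}t_i\epsilon_i)\bigr)^{-1}\le\bigl(\varphi'(\sum_{i\ge k}t_i\epsilon_i)\bigr)^{-1}$, so $c_2=2$ would break that downstream step. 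Your approach can still deliver $c_2<1$ by using the full strength of the hypothesis: take $\delta'>0$ with $\epsilon_k\le 2-\delta'$ eventually, so $(\tfrac12+Lt_k)\epsilon_k^2\le(\tfrac12+Lt_k)(2-\delta')\epsilon_k$ and the factor $(\tfrac12+Lt_k)(2-\delta')$ drops below $1$ once $t_k$ is small enough.
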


\begin{proof}
    First, fix $k\in\mathbb{N}$ and deduce from the Cauchy-Schwarz inequality that
    \begin{align}\label{esti product}
    \langle g_k,\mathrm{grad} f(x_k)\rangle&=\langle g_k-\mathrm{grad} f(x_k),\mathrm{grad} f(x_k)\rangle+\|\mathrm{grad} f(x_k)\|^2\nonumber\\
    &\ge -\|g_k-\mathrm{grad} f(x_k)\|\cdot\|\mathrm{grad} f(x_k)\|+\|\mathrm{grad} f(x_k)\|^2\nonumber\\
    &\ge -\epsilon_k\|\mathrm{grad} f(x_k)\|+\|\mathrm{grad} f(x_k)\|^2.
    \end{align}
By \eqref{parameter general}, we find some $c_1>0,c_2\in(0,1)$, and $K\in\mathbb{N}$ such that 
\begin{align}\label{defi c1c2}
    \frac{1}{2}( 2-Lt_k- \epsilon_k+L t_k\epsilon_k)\ge c_1,\quad \frac{1}{2}(1-Lt_k)+\frac{Lt_k\epsilon_k}{2}\le  c_2,\;\mbox{ and}\quad Lt_k<1, \;
\end{align}
for all $k\ge K$. Since $\mathrm{grad} f$ is Lipschitz continuous with constant $L$, it follows from the descent condition in Assumption \ref{descent condition} and the estimate \eqref{esti product} that
    \begin{align*}
        &f(\mathcal{R}_{x_k}(-t_kg_k))\le f(x_k)-t_k\langle\mathrm{grad} f(x_k),g_k\rangle+\frac{Lt_k^2}{2}\|g_k\|^2\nonumber\\
        &=f(x_k)-t_k(1-Lt_k)\langle\mathrm{grad} f(x_k),g_k\rangle+\frac{Lt_k^2}{2}(\|g_k-\mathrm{grad} f(x_k)\|^2-\|\mathrm{grad} f(x_k)\|^2)\nonumber\\
        &\le f(x_k)-t_k(1-Lt_k)\Big(-\epsilon_k\|\mathrm{grad} f(x_k)+\|\mathrm{grad} f(x_k)\|^2\Big)+\frac{Lt_k^2\epsilon_k^2}{2}-\frac{Lt_k^2}{2}\|\mathrm{grad} f(x_k)\|^2\nonumber\\
        &=f(x_k)-\frac{t_k}{2}(2-{Lt_k})\|\mathrm{grad} f(x_k)\|^2+t_k(1-Lt_k)\epsilon_k\|\mathrm{grad} f(x_k)\|+\frac{Lt_k^2\epsilon_k^2}{2}\nonumber\\
        &\le f(x_k)-\frac{t_k}{2}(2-{Lt_k})\|\mathrm{grad} f(x_k)\|^2+\frac{1}{2} t_k(1-Lt_k)\epsilon_k\Big(1+\|\mathrm{grad} f(x_k)\|^2\Big)+\frac{Lt_k^2\epsilon_k^2}{2}\nonumber\\        
        &=f(x_k)-\frac{t_k}{2}(2-Lt_k-\epsilon_k+L t_k\epsilon_k)\|\mathrm{grad} f(x_k)\|^2+\frac{1}{2} t_k\epsilon_k(1-Lt_k)+\frac{Lt_k^2\epsilon_k^2}{2}\nonumber\\
        &=f(x_k)-\frac{t_k}{2}(2-Lt_k-\epsilon_k+L t_k\epsilon_k)\|\mathrm{grad} f(x_k)\|^2+t_k\epsilon_k\Big(\frac{1}{2} (1-Lt_k)+\frac{Lt_k\epsilon_k}{2}\Big).\nonumber 
    \end{align*} 
Combining this with \eqref{defi c1c2} gives us
\begin{align}\label{non descent}
    f(\mathcal{R}_{x_k}(-t_kg_k))\le f(x_k) -c_1 t_k\|\mathrm{grad} f(x_k)\|^2+c_2t_k\epsilon_k\;\text{ whenever }\;k\ge K.
\end{align}
\end{proof}

\begin{assumption}\label{lower bounded}
    Assume that the objective function $f$ is bounded below, i.e., $\inf_{k\in\mathbb{N}} f(x_k)>-\infty$.
\end{assumption}

To analyze the convergence properties of the IRGD method based on the Riemannian KL property, we require a result analogous to those in \cite{huang2022riemannian} concerning vector transport, under the following assumption.

\begin{assumption}[Individual Retraction Lipschitzness]\label{Lips def}
    A function $f:\mathcal{M}\rightarrow \mathbb{R}$ is said to posses a Lipschitz continuous gradient with the uniform constant $L>0$, or equivalently it belongs to the class $\mathcal{C}^{1,L}$, if we have the estimate
\begin{align*}
\|\mathcal{T}_{y}^x\mathrm{grad} f(y)-\mathrm{grad} f(x)\|\le L\|\eta\|, ~~\text{or}~~\|\mathrm{grad} f(y)-\mathcal{T}_{\mathcal{R}_x(\eta)}^{-\sharp}\mathrm{grad} f(x)\|\le L\|\eta\|,
\end{align*}
for all $x, y=\mathcal{R}_x(\eta)\in\mathcal{M}$.
\end{assumption}

The global convergence properties of Algorithm \ref{Alg-RGD} are detailed in the following theorem, which addresses both the gradient sequences and the function values.

\begin{theorem}\label{convergence Alg-RGD}
   Suppose that Assumption \ref{descent condition},\ref{lower bounded} and \ref{Lips def} holds.  Let $\{x_k\}$ be generated by Algorithm~\ref{Alg-RGD} with stepsize and errors satisfying the conditions \eqref{parameter general}.
    Then the following convergence properties hold:
    
    \begin{enumerate}[label=(\roman*)]
    \item\label{thm1-grad} $\mathrm{grad} f(x_k)\rightarrow0$, and thus every accumulation point of the iterative sequence $\{x_k\}$ is stationary for $f.$
    \item\label{thm1-f(x_k)} If $\bar x$ is an accumulation point of the sequence $\{x_k\}$, then  $f(x_k)\rightarrow f(\bar x)$.
    \end{enumerate}
\end{theorem}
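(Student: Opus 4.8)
\textbf{Proof proposal for Theorem \ref{convergence Alg-RGD}.}

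The plan is to derive part \ref{thm1-grad} from the descent-type inequality of Lemma \ref{descent lemma} combined with the three-sequences result of Lemma \ref{three sequences lemma}, and then to obtain part \ref{thm1-f(x_k)} by a continuity-plus-summability argument. First I would set $\alpha_k := \|\mathrm{grad} f(x_k)\|$, $\beta_k := c_1 t_k$, and $\gamma_k := c_2 t_k \epsilon_k$, where $c_1,c_2$ are the constants from \eqref{defi c1c2}. Summing the inequality $f(x_{k+1}) \le f(x_k) - c_1 t_k \|\mathrm{grad} f(x_k)\|^2 + c_2 t_k\epsilon_k$ of Lemma \ref{descent lemma} from $k=K$ to $N$, using Assumption \ref{lower bounded} (so $f(x_k)$ is bounded below) and $\sum t_k\epsilon_k < \infty$ from \eqref{parameter general}, yields $\sum_{k} t_k \|\mathrm{grad} f(x_k)\|^2 < \infty$, i.e.\ $\sum \beta_k \alpha_k^2 < \infty$. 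To invoke Lemma \ref{three sequences lemma} I also need the recursion $\alpha_{k+1} - \alpha_k \le \beta_k \alpha_k + \gamma_k$ (possibly after adjusting constants); this is where Assumption \ref{Lips def} enters — from $\|\mathrm{grad} f(x_{k+1})\| \le \|\mathcal{T}^{-\sharp}_{\mathcal{R}_{x_k}(\eta_k)}\mathrm{grad} f(x_k)\| + L\|\eta_k\|$ with $\eta_k = -t_k g_k$, together with $\|g_k\| \le \|\mathrm{grad} f(x_k)\| + \epsilon_k$ and an isometry/boundedness bound on the vector transport, one obtains $\alpha_{k+1} \le \alpha_k + L t_k(\alpha_k + \epsilon_k)$, which has exactly the form \eqref{a} with $\beta_k = L t_k$ (bounded, and $\sum \beta_k = \infty$ since $\sum t_k = \infty$) and $\gamma_k = L t_k \epsilon_k$ (summable). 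Since all three conditions in \eqref{b} hold, Lemma \ref{three sequences lemma} gives $\alpha_k = \|\mathrm{grad} f(x_k)\| \to 0$; continuity of $\mathrm{grad} f$ then makes any accumulation point stationary.

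For part \ref{thm1-f(x_k)}, I would first argue that $\sum_k \|x_{k+1} - x_k\| \cdot \|\mathrm{grad} f(x_k)\|$ type quantities and, more to the point, $\sum_k t_k\|\mathrm{grad} f(x_k)\|^2$ and $\sum_k t_k \epsilon_k$ being finite force the negative part of the increments $f(x_{k+1}) - f(x_k)$ to be summable, while the positive part is bounded by $c_2 t_k\epsilon_k$ which is summable; hence $\{f(x_k)\}$ is a convergent sequence (it is a sequence with summable positive variation that is bounded below, so it converges to some $f^\ast \in \mathbb{R}$). If $\bar x$ is an accumulation point of $\{x_k\}$, say $x_k \xrightarrow{J} \bar x$, then by continuity of $f$ we get $f(x_k) \xrightarrow{J} f(\bar x)$; since the whole sequence $\{f(x_k)\}$ converges, its limit must equal $f(\bar x)$, i.e.\ $f(x_k) \to f(\bar x)$.

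The main obstacle I anticipate is establishing the one-step gradient recursion $\alpha_{k+1} \le \alpha_k + Lt_k(\alpha_k + \epsilon_k)$ cleanly in the Riemannian setting: one must relate $\mathrm{grad} f(x_{k+1})$ at the new point to $\mathrm{grad} f(x_k)$ through vector transport, invoke Assumption \ref{Lips def} in the correct ($\mathcal{T}^{-\sharp}$) form, and control $\|g_k\|$ via the inexactness bound \eqref{unnormalized-r} — all while being careful that norms live in different tangent spaces and that the vector transport (or its adjoint inverse) is norm-preserving or at least uniformly bounded. A secondary technical point is verifying that the constants $c_1, c_2$ from Lemma \ref{descent lemma} are genuinely positive and that the threshold index $K$ does not interfere with the infinite-sum hypotheses of Lemma \ref{three sequences lemma} (which only require the recursion for sufficiently large $k$). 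Once the recursion is in hand, both parts follow by direct application of the cited lemmas with essentially no further analysis.
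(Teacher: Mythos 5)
Your proposal is correct and follows essentially the same route as the paper: both parts rest on summing the descent inequality of Lemma \ref{descent lemma} to get $\sum_k t_k\|\mathrm{grad} f(x_k)\|^2<\infty$, invoking Lemma \ref{three sequences lemma} with $\alpha_k=\|\mathrm{grad} f(x_k)\|$, $\beta_k=Lt_k$, $\gamma_k=Lt_k\epsilon_k$ via the vector-transport Lipschitz bound of Assumption \ref{Lips def}, and then using continuity of $f$ at an accumulation point. The only cosmetic difference is that the paper makes $\{f(x_k)+u_k\}$ monotone with the tail sums $u_k=c_2\sum_{i\ge k}t_i\epsilon_i$, whereas you argue convergence of $\{f(x_k)\}$ directly from summable positive variation — these are equivalent.
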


\begin{proof}
Defining $u_k:=c_2\sum_{i=k}^\infty t_i\epsilon_i$ for $k\in\mathbb{N}$, we get that $u_k\rightarrow 0$ as $k\rightarrow\infty$ and $u_k-u_{k+1}=c_2t_k\epsilon_k$ for all $k\in\mathbb{N}.$ Then \eqref{non descent} can be rewritten as
    \begin{align}\label{descent f xk uk}
        f(\mathcal{R}_{x_k}(-t_kg_k))+u_{k+1}\le f(x_k)+u_k-c_1t_k\|\mathrm{grad} f(x_k)\|^2,\quad k\ge K.
    \end{align}
    
To proceed now with the proof of \ref{thm1-grad}, we deduce from \eqref{descent f xk uk} combined with $\inf_{k\in\mathbb{N}} f(x_k)>-\infty$ and $u_k\rightarrow0$ as $k\rightarrow\infty$ that
    \begin{align*}
        c_1\sum_{k=K}^\infty t_k\|\mathrm{grad} f(x_k)\|^2&\le \sum_{k=K}^\infty (f(x_k)-f(\mathcal{R}_{x_k}(-t_kg_k))+u_k-u_{k+1} )\\
        &\le f(x_K)-\inf_{k\in\mathbb{N}} f(x_k)+u_K<\infty.
    \end{align*}
Next we employ Lemma~\ref{three sequences lemma} with $\alpha_k:=\|\mathrm{grad} f(x_k)\|$, $\beta_k:=Lt_k$, and $\gamma_k:=Lt_k\epsilon_k$ for all $k\in\mathbb{N}$ to derive $\mathrm{grad} f(x_k)\rightarrow0.$ Observe first that condition \eqref{a} is satisfied due to the the estimates
\begin{align*}
    {\alpha_{k+1}-\alpha_k}&={\|\mathrm{grad} f(x_{k+1})}\|-\|\mathrm{grad} f(x_k)\| \le \|\mathcal{T}_{x_{k+1}}^{x_k}\mathrm{grad} f(x_{k+1})-\mathrm{grad} f(x_k)\|\\
    &=Lt_k\|g_k\| \le Lt_k (\|\mathrm{grad} f(x_k)\|+\|g_k-\mathrm{grad} f(x_k)\|)\\
    &\le Lt_k (\|\mathrm{grad} f(x_k)\|+\epsilon_k)=\beta_k \alpha_k+\gamma_k\;\text{ for all }\;k\in\mathbb{N}.
\end{align*}
Further, the conditions in \eqref{b} hold by \eqref{parameter general} and $\sum_{k=1}^\infty t_k\|\mathrm{grad} f(x_k)\|^2<\infty.$ As all the assumptions \eqref{a}, \eqref{b} are satisfied, Lemma~\ref{three sequences lemma} tells us that $\|\mathrm{grad} f(x_k)\|=\alpha_k\rightarrow0$ as $k\rightarrow\infty.$

To verify \ref{thm1-f(x_k)}, deduce from \eqref{descent f xk uk} that $\{f(x_k)+u_k\}$ is nonincreasing. Since $\inf_{k\in\mathbb{N}} f(x_k)>-\infty$ and $u_k\rightarrow 0$, it follows that $\{f(x_k)+u_k\}$ is bounded from below, and thus is convergent. Taking into account that $u_k\rightarrow 0$, it follows that $f(x_k)$ is convergent as well. Since $\bar x$ is an accumulation point of $\{x_k\}$,
the continuity of $f$ tells us that $f(\bar x)$ is also an accumulation point of $\{f(x_k)\},$ which immediately yields $f(x_k)\rightarrow f(\bar x)$ due to the convergence of $\{f(x_k)\}.$ 
\end{proof}

The following assumption on the desingularizing function, as defined in Definition \ref{RKL}, is utilized by \cite{li2023convergence}.

\begin{assumption}\label{assu desi}
    There is some $C>0$ such that 
    \begin{align*}
    C[\varphi^\prime(x+y)]^{-1}\le (\varphi^\prime(x))^{-1}+(\varphi^\prime(y))^{-1},
    \end{align*}
    whenever $x,y\in (0,\delta)$ with $x+y<\delta$.
\end{assumption}

When $f$ satisfies the Riemannian KL property in the set of all accumulation points, the following convergence result holds for the sequence of iterates.

\begin{theorem}\label{convergence iterates}
     Under the same condition as in the Theorem \ref{convergence Alg-RGD} and $\mathcal{S}$ denote the set of all accumulation points. Suppose that $f$ satisfies the Riemannian KL property at every point in $\mathcal{S}$ with the desingularizing function $\varphi$ satisfying Assumption~\ref{assu desi}. Assume in addition that 
    \begin{align}\label{desing condi}
            \sum_{k=1}^\infty t_k\Big(\varphi^\prime(\sum_{i=k}^\infty t_k\epsilon_k)\Big)^{-1}<\infty,
    \end{align} 
    and that $f(x_k) > f(\bar x)$ for sufficiently large $k \in \mathbb{N}$. Then $x_k \rightarrow \bar x$ as $k\rightarrow\infty$. In particular, if $\bar x$ is a global minimizer of $f$, then either $f(x_k) = f(\bar x)$ for some $k \in \mathbb{N}$, or $x_k \rightarrow \bar x$. 
\end{theorem}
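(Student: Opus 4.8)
\textbf{Proof proposal for Theorem~\ref{convergence iterates}.}

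The plan is to run the standard Kurdyka--\L ojasiewicz argument adapted to the Riemannian setting, using the merit function $f(x_k)+u_k$ (with $u_k:=c_2\sum_{i=k}^\infty t_i\epsilon_i$) introduced in the proof of Theorem~\ref{convergence Alg-RGD}, which we already know is nonincreasing, bounded below, and convergent to some limit $f^\ast$. First I would argue that since $\{x_k\}$ has an accumulation point $\bar x$ and $\sum t_k^2<\infty$ while $\mathrm{grad}f(x_k)\to0$, Lemma~\ref{R-dist} gives $\|x_{k+1}-x_k\|\le\kappa t_k\|g_k\|\to0$, so by a standard argument the accumulation set $\mathcal{S}$ is connected, compact (by Assumption~\ref{lower bounded} plus the decrease estimate, $\{x_k\}$ stays in a compact sublevel-type set), and $f$ is constant on $\mathcal{S}$ with value $f^\ast=\lim f(x_k)$. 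Then I would invoke Lemma~\ref{RKL:compact} to obtain a \emph{single} desingularizing function $\varphi$, radius $\bar\omega$, and threshold $\delta$ that work uniformly on a neighborhood of $\mathcal{S}$; combined with $\mathrm{dist}(x_k,\mathcal{S})\to0$ and $f(x_k)\downarrow f^\ast$ from above, for all large $k$ one has $\varphi'(f(x_k)-f^\ast)\,\|\mathrm{grad}f(x_k)\|\ge1$.

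The core estimate is to bound $\|x_{k+1}-x_k\|$ by a telescoping difference of $\varphi$-values. Writing $\Delta_k:=\varphi(f(x_k)+u_k-f^\ast)-\varphi(f(x_{k+1})+u_{k+1}-f^\ast)$, concavity of $\varphi$ gives $\Delta_k\ge\varphi'(f(x_k)+u_k-f^\ast)\cdot\big(f(x_k)+u_k-f(x_{k+1})-u_{k+1}\big)\ge\varphi'(f(x_k)+u_k-f^\ast)\cdot c_1t_k\|\mathrm{grad}f(x_k)\|^2$, using \eqref{descent f xk uk}. The subtlety is that the KL inequality controls $\varphi'(f(x_k)-f^\ast)$, not $\varphi'(f(x_k)+u_k-f^\ast)$; this is precisely where Assumption~\ref{assu desi} enters. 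Since $u_k=c_2\sum_{i\ge k}t_i\epsilon_i$, applying Assumption~\ref{assu desi} with $x=f(x_k)-f^\ast$ and $y=u_k$ yields $C[\varphi'(f(x_k)+u_k-f^\ast)]^{-1}\le[\varphi'(f(x_k)-f^\ast)]^{-1}+[\varphi'(u_k)]^{-1}\le\|\mathrm{grad}f(x_k)\|+[\varphi'(u_k)]^{-1}$. Hence $\varphi'(f(x_k)+u_k-f^\ast)\ge C/\big(\|\mathrm{grad}f(x_k)\|+[\varphi'(u_k)]^{-1}\big)$, and plugging into the $\Delta_k$ bound gives $\Delta_k\ge \dfrac{c_1 C\, t_k\|\mathrm{grad}f(x_k)\|^2}{\|\mathrm{grad}f(x_k)\|+[\varphi'(u_k)]^{-1}}$. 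Rearranging (using $a\ge \frac{ab^2}{a+c}\iff$ ... more precisely $\frac{b^2}{b+c}\ge b-c$ style manipulations, or AM--GM on $t_k\|\mathrm{grad}f(x_k)\|$ versus $\Delta_k$ and $t_k[\varphi'(u_k)]^{-1}$), one derives $t_k\|\mathrm{grad}f(x_k)\|\lesssim \Delta_k + t_k[\varphi'(u_k)]^{-1}$ for large $k$.

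Summing over $k$ then finishes the argument: $\sum_k \Delta_k$ telescopes and is finite (since $\varphi\ge0$ and $\varphi(f(x_k)+u_k-f^\ast)\to\varphi(0)=0$), and $\sum_k t_k[\varphi'(u_k)]^{-1}=\sum_k t_k\big(\varphi'(\sum_{i\ge k}t_i\epsilon_i)\big)^{-1}<\infty$ is exactly hypothesis \eqref{desing condi}. Therefore $\sum_k t_k\|\mathrm{grad}f(x_k)\|<\infty$, and since $\|x_{k+1}-x_k\|\le\kappa t_k\|g_k\|\le\kappa t_k(\|\mathrm{grad}f(x_k)\|+\epsilon_k)$ with $\sum t_k\epsilon_k<\infty$ from \eqref{parameter general}, we get $\sum_k\|x_{k+1}-x_k\|<\infty$. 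Thus $\{x_k\}$ is Cauchy (the manifold distance is locally equivalent to the ambient norm), hence convergent; as $\bar x$ is an accumulation point, $x_k\to\bar x$. The final dichotomy for a global minimizer follows since then the standing assumption "$f(x_k)>f(\bar x)$ for large $k$" either fails---in which case $f(x_k)=f(\bar x)=\min f$ for some $k$ and the monotone merit function forces $x_k$ to be essentially stationary---or holds, and the above gives $x_k\to\bar x$. The main obstacle I anticipate is the bookkeeping in the intermediate inequality: carefully converting the lower bound on $\Delta_k$ (which has $\|\mathrm{grad}f(x_k)\|^2$ over a sum in the denominator) into a usable upper bound on $t_k\|\mathrm{grad}f(x_k)\|$, ensuring the split into the telescoping term and the summable error term $t_k[\varphi'(u_k)]^{-1}$ is legitimate and that all the "for sufficiently large $k$" qualifiers (from Lemma~\ref{RKL:compact}, from $\varepsilon_k$ decay, and from $\mathrm{dist}(x_k,\mathcal{S})<\bar\omega$) are compatible.
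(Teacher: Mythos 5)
Your proposal is correct and follows essentially the same route as the paper's proof: the merit function $f(x_k)+u_k$ from \eqref{descent f xk uk}, Lemma~\ref{RKL:compact} for a uniform desingularizing function, Assumption~\ref{assu desi} applied with $x=f(x_k)-f(\bar x)$ and $y=u_k$ to split $[\varphi'(f(x_k)+u_k-f(\bar x))]^{-1}$, the KL inequality to replace $[\varphi'(f(x_k)-f(\bar x))]^{-1}$ by $\|\mathrm{grad} f(x_k)\|$, AM--GM to isolate $t_k\|\mathrm{grad} f(x_k)\|$ against the telescoping term and the summable term $t_k[\varphi'(u_k)]^{-1}$ controlled by \eqref{desing condi}, and finally $\sum\|x_{k+1}-x_k\|<\infty$ via \eqref{Ostrowski condition:1}. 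You even correctly identified the one subtle point (the mismatch between $\varphi'(f(x_k)-f(\bar x))$ and $\varphi'(f(x_k)+u_k-f(\bar x))$) as the precise place where Assumption~\ref{assu desi} is needed, which is exactly how the paper proceeds.
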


\begin{proof}
First note that the global convergence result in \ref{thm1-grad} of Theorem \ref{convergence Alg-RGD} implies that every point in $\mathcal{S}$ is a stationary point. Since $\mathrm{grad} f(x_k)\rightarrow0$, there exists a $\delta >0$ such that $\|\mathrm{grad} f(x_k)\|\le \delta$ for all $k$. Thus, the application of Lemma \ref{R-dist} implies that
\begin{align}\label{Ostrowski condition:1}
    \|x_{k+1}-x_k\|
    & =\|\mathcal{R}_{x_k}(t_kg_k)-x_k\|\le \kappa t_k\|g_k\| \nonumber\\
    & \le \kappa t_k(\|g_k-\mathrm{grad}f(x_k)\|+\|\mathrm{grad} f(x_k)\|) \nonumber\\
    &\le \kappa t_k(\epsilon_k +\|\mathrm{grad} f(x_k)\|)\rightarrow 0.
\end{align}
Then by \cite{bolte2014proximal}, we konw that $\mathcal{S}$ is a compect set. Morever, since $f(x_k)$ is convergent, thus $f$ has the same value at all the points in $\mathcal{S}$. Therefore, by Lemma \ref{RKL:compact}, there exists a single desingularizing function satisfying Assumption~\ref{assu desi}, denote $\varphi$, for the Riemannian KL property of $f$ to hold at all the points in $\mathcal{S}$. 

In the case when $f(x_k) > f(\bar x)$ for all $k$, since $f(x_k)\rightarrow f(\bar x)$, $ \inf_{\bar x \in \mathcal{S}}\|x_k-\bar x\|\rightarrow 0$, by the Riemannian KL property of $f$ on $\mathcal{S}$, there exists an $l>0$ such that 
\begin{align}\label{for eq3}
    \varphi^\prime(f(x_k)-f(\bar x))\|\mathrm{grad} f(x_k)\|\ge 1 \;\text{ for all }\;k\ge l.
\end{align}
Define $\Delta_{p,q}:=\varphi (f(x_p)-f(\bar x)+u_p)- \varphi (f(x_q)-f(\bar x)+u_q)$ for all $p, q\in \mathbb{N}$, and combining this with $u_k>0$ and $f(x_k)-f(\bar x)>0$ gives us
\begin{subequations}
\begin{align}
\Delta_{k,k+1}
        &\ge \varphi^\prime(f(x_k)-f(\bar x)+u_k)(f(x_k)+u_k-f(x_{k+1})-u_{k+1})\label{eq1}\\
        &\ge \frac{C}{(\varphi^\prime(f(x_k)-f(\bar x)))^{-1}+(\varphi^\prime(u_k))^{-1}}c_1t_k\|\mathrm{grad} f(x_k)\|^2 \label{eq2}\\
        &\ge \frac{C}{\|\mathrm{grad} f(x_k)\|+(\varphi^\prime(u_k))^{-1}}c_1t_k\|\mathrm{grad} f(x_k)\|^2, \label{eq3}
\end{align}
\end{subequations}
where \eqref{eq1} follows from the concavity of $\varphi$, \eqref{eq2} follows from \eqref{descent f xk uk} and Assumption~\ref{assu desi}, and \eqref{eq3} follows from \eqref{for eq3}. Taking the square root of both sides in \eqref{eq3} and employing the AM-GM inequality yield
    \begin{align}\label{eq5}
      t_k\|\mathrm{grad} f(x_k)\|& = \sqrt{t_k}\cdot \sqrt{t_k\|\mathrm{grad} f(x_k)\|^2}\\
      &\le \sqrt{\frac{1}{Cc_1}(\Delta_{k,k+1})t_k(\|\mathrm{grad} f(x_k)\|+(\varphi^\prime(u_k))^{-1})}\nonumber\\
      &\le \frac{1}{2Cc_1}(\Delta_{k,k+1})+\frac{1}{2}t_k\Big((\varphi^\prime(u_k))^{-1}+\|\mathrm{grad} f(x_k)\|\Big).
    \end{align}
Using the nonincreasing property of $\varphi^\prime$ due to the concavity of $\varphi$ and the choice of $c_2\in (0,1)$ ensures that 
\begin{align*}
\Big(\varphi^\prime(u_k)\Big)^{-1}=\Big(\varphi^\prime(c_2\sum_{i=k}^\infty t_i\epsilon_i)\Big)^{-1}\le \Big(\varphi^\prime(\sum_{i=k}^\infty t_i\epsilon_i)\Big)^{-1}.
\end{align*}
Rearranging terms and taking the sum over $i=l+1, \ldots, k$ of inequality \eqref{eq5} gives us
    \begin{align*}
        \sum_{i={l+1}}^{k} t_i\|\mathrm{grad} f(x_i)\|&\le \frac{1}{{Cc_1}} \sum_{i={l+1}}^{k}(\Delta_{i,i+1})+ \sum_{i={l+1}}^{k} t_i\Big(\varphi^\prime(u_{i})\Big)^{-1}\\
        &= \frac{1}{{Cc_1}}(\Delta_{l+1,k+1})+ \sum_{i={l+1}}^{k} t_i \Big(\varphi^\prime (c_2\sum_{i=k}^\infty t_i\epsilon_{i})\Big)^{-1}\\
        &\le \frac{1}{{Cc_1}} \Delta_{l+1} + \sum_{i={l+1}}^{k} t_i\Big(\varphi^\prime(\sum_{i=k}^\infty t_i\epsilon_i)\Big)^{-1}.
    \end{align*}
Taking the number $C$ from  Assumption~\ref{assu desi}, remembering that $\bar x$ is an accumulation point of $\{x_k\},$ and using $f(x_k)+u_k\downarrow f(\bar x)$, $\Delta_k\downarrow0$  as $k\rightarrow\infty$ together with  condition \eqref{desing condi}, which yields $ \sum_{k=1}^\infty t_k\|\mathrm{grad} f(x_k)\|<\infty$. Inserting this into \eqref{Ostrowski condition:1} gives 
\begin{align*}
      \sum_{k=1}^\infty \|x_{k+1}-x_k\| \le \kappa \sum_{k=1}^\infty t_k\epsilon_{k} +\kappa\sum_{k=1}^\infty t_k\|\mathrm{grad} f(x_k)\|<\infty,
\end{align*}
which justifies the convergence of $\{x_k\}$ and thus completes the proof of the theorem.
\end{proof}

\subsection{Normalized condition}\label{cond-II}
In this subsection, we present a general framework for the standardized variants of our novel IRGD method with relative error and analyze the convergence properties of IRGDr under the inexact gradient normalized condition given by \eqref{normalized-r}.

\begin{algorithm}
\caption{IRGDr method} \label{Alg-RGDr}
\begin{algorithmic}[1]
\STATE
        Choose some initial point $x_0\in\mathcal{M},$ a relative error $\nu\ge 0$, and a sequence of stepsizes $\{t_k\}\subset[0,\infty).$ For $k=1,2,\ldots,$ do the following
\STATE
        Set $x_{k+1}=\mathcal{R}_{x_k}(-t_k g_k)$ with $\|g_k-\mathrm{grad} f(x_k)\| \le \nu \|\mathrm{grad} f(x_k)\|$, where $g_k\in T_{x_k} \mathcal{M}$.
\end{algorithmic}
\end{algorithm}

This algorithm differs from Algorithm \ref{Alg-RGD} in that the convergence properties of IRGDr are established under stronger stepsize assumptions and provide additional conclusions on the convergence rate. This makes it particularly suitable for constructing the REG method discussed in Sect.~\ref{app:reg}. The following lemma verifies the descent property of the objective function.

\begin{lemma}
    Let $\{x_k\}$ be the sequence generated by Algorithm~\ref{Alg-RGDr}. It holds that
    $$
    (1-\nu )\|\mathrm{grad} f(x_k)\|\le \|g_k\|\le(1+\nu) \|\mathrm{grad} f(x_k)\|.
    $$
\end{lemma}

\begin{proof}
    Using $\|\mathrm{grad} f(x_k)-g_k\|\le \nu\|\mathrm{grad} f(x_k)\|$ gives us the estimates
\begin{align}\label{es 1}
    \|g_k\|^2&=\|\mathrm{grad} f(x_k)-g_k\|^2 -\|\mathrm{grad} f(x_k)\|^2+2\langle\mathrm{grad} f(x_k),g_k\rangle \nonumber\\
    &\le \nu^2 \|\mathrm{grad} f(x_k)\|^2-\|\mathrm{grad} f(x_k)\|^2+2\langle\mathrm{grad} f(x_k),g_k\rangle\nonumber\\
    &= -(1-\nu^2)\|\mathrm{grad} f(x_k)\|^2+2\langle\mathrm{grad} f(x_k),g_k\rangle,
\end{align}
\begin{align}\label{es 2}
    \langle\mathrm{grad} f(x_k),g_k\rangle &=\langle\mathrm{grad} f(x_k),g_k-\mathrm{grad} f(x_k)\rangle+\|\mathrm{grad} f(x_k)\|^2\nonumber\\
    &\le \|\mathrm{grad} f(x_k)\|\cdot\|g_k-\mathrm{grad} f(x_k)\|+\|\mathrm{grad} f(x_k)\|^2\nonumber\\
    &\le(1+\nu)\|\mathrm{grad} f(x_k)\|^2,
\end{align}
\begin{align}\label{es 3}
    -\langle\mathrm{grad} f(x_k),g_k\rangle &=-\langle\mathrm{grad} f(x_k),g_k-\mathrm{grad} f(x_k)\rangle-\|\mathrm{grad} f(x_k)\|^2\nonumber\\
    &\le \|\mathrm{grad} f(x_k)\|\cdot\|g_k-\mathrm{grad} f(x_k)\|-\|\mathrm{grad} f(x_k)\|^2\nonumber\\
    &\le -(1-\nu)\|\mathrm{grad} f(x_k)\|^2,
\end{align}
\begin{align}
 \|\mathrm{grad} f(x_k)\|-\|g_k-\mathrm{grad} f(x_k)\|  \le \|g_k\|\le \|\mathrm{grad} f(x_k)\|+\|g_k-\mathrm{grad} f(x_k)\|,
\end{align}
which in turn implies that
\begin{align}\label{two inequality}
    (1-\nu )\|\mathrm{grad} f(x_k)\|\le \|g_k\|\le(1+\nu) \|\mathrm{grad} f(x_k)\|\text{ for all }k\in\mathbb{N}.
\end{align}
\end{proof}

Based on the assumption on the stepsize, we can derive the fundamental convergence properties of Algorithm \ref{Alg-RGDr}. These properties include the stationarity of accumulation points, the convergence of the gradient sequence to the origin, and the convergence of the function values to the optimal value. These results are presented in the following theorem.

\begin{theorem}\label{theo Alg-RGDr}
    Suppose that Assumption \ref{descent condition}, \ref{lower bounded}, \ref{Lips def} holds. Let $f:\mathcal{M}\rightarrow\mathbb{R}$ be a smooth function satisfying the descent condition for some constant $L>0,$ and let $\{x_k\}$ be the sequence generated by Algorithm~\ref{Alg-RGDr} with  the relative error $\nu \in [0,1)$, and the stepsizes satisfying 
    \begin{align}\label{stepsize Alg-RGDr}
       \sum_{k=1}^\infty t_k=\infty\;\text{ and }\; t_k\in [0,\frac{2-2\nu-\delta}{L(1+\nu)^2}]
    \end{align}
     for sufficiently large $k\in \mathbb{N}$ and for some $\delta>0$. Then either $f(x_k)\rightarrow-\infty$, or we have the assertions:
    \begin{enumerate}[label=(\roman*)]
    \item\label{thm3-stationary point} Every accumulation point of $\{x_k\}$ is a stationary point of the cost function $f$.
    \item\label{thm3-f(x_k)} If the sequence $\{x_k\}$ has any accumulation point $\bar x$, then $f(x_k)\downarrow f(\bar x)$.
    \item\label{thm3-grad} If $f\in\mathcal{C}^{1,L}$, then $\mathrm{grad} f(x_k)\rightarrow 0.$ 
    \end{enumerate}
\end{theorem}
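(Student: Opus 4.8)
The plan is to mirror the structure of the proof of Theorem~\ref{convergence Alg-RGD}, but exploit the normalized condition \eqref{normalized-r} in place of the absolute-error bound. First I would establish a one-step descent inequality. Applying Assumption~\ref{descent condition} with $\eta=-t_kg_k$ gives
\begin{align*}
f(x_{k+1})\le f(x_k)-t_k\langle\mathrm{grad} f(x_k),g_k\rangle+\frac{Lt_k^2}{2}\|g_k\|^2.
\end{align*}
Using \eqref{es 3} to bound $-\langle\mathrm{grad} f(x_k),g_k\rangle\le-(1-\nu)\|\mathrm{grad} f(x_k)\|^2$ and \eqref{two inequality} to bound $\|g_k\|^2\le(1+\nu)^2\|\mathrm{grad} f(x_k)\|^2$, I obtain
\begin{align*}
f(x_{k+1})\le f(x_k)-t_k\Big((1-\nu)-\frac{Lt_k(1+\nu)^2}{2}\Big)\|\mathrm{grad} f(x_k)\|^2.
\end{align*}
The stepsize hypothesis \eqref{stepsize Alg-RGDr} is precisely designed so that $(1-\nu)-\tfrac{Lt_k(1+\nu)^2}{2}\ge\tfrac{\delta}{2}>0$ for large $k$, hence there is a constant $c=\delta/2>0$ with $f(x_{k+1})\le f(x_k)-ct_k\|\mathrm{grad} f(x_k)\|^2$ for all sufficiently large $k$. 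In particular $\{f(x_k)\}$ is eventually nonincreasing.

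Next I would run the usual dichotomy on $\{f(x_k)\}$: being eventually monotone, it either diverges to $-\infty$ or converges to a finite limit; in the latter case, summing the descent inequality and invoking Assumption~\ref{lower bounded} yields $\sum_k t_k\|\mathrm{grad} f(x_k)\|^2<\infty$. This is the workhorse estimate. For part~\ref{thm3-f(x_k)}, the monotone convergence of $\{f(x_k)\}$ together with continuity of $f$ and the existence of an accumulation point $\bar x$ forces $f(x_k)\downarrow f(\bar x)$, exactly as in the proof of Theorem~\ref{convergence Alg-RGD}\ref{thm1-f(x_k)}. For part~\ref{thm3-stationary point}, I would argue that $\liminf_k\|\mathrm{grad} f(x_k)\|=0$: otherwise $\|\mathrm{grad} f(x_k)\|\ge\epsilon>0$ eventually would give $\sum_k t_k\le\epsilon^{-2}\sum_k t_k\|\mathrm{grad} f(x_k)\|^2<\infty$, contradicting $\sum_k t_k=\infty$. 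Then for any accumulation point $\bar x$, take a subsequence $x_k\xrightarrow{J}\bar x$; combining with $\liminf\|\mathrm{grad} f(x_k)\|=0$ and continuity of $\mathrm{grad} f$ (one may pass through Lemma~\ref{stationary point lemma}, or directly extract a further subsequence) yields $\mathrm{grad} f(\bar x)=0$.

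Finally, for part~\ref{thm3-grad}, under the extra assumption $f\in\mathcal{C}^{1,L}$ (i.e.\ Assumption~\ref{Lips def}), I would upgrade $\liminf\|\mathrm{grad} f(x_k)\|=0$ to $\lim\|\mathrm{grad} f(x_k)\|=0$ via Lemma~\ref{three sequences lemma}, setting $\alpha_k:=\|\mathrm{grad} f(x_k)\|$, $\beta_k:=Lt_k$, and $\gamma_k:=L\nu t_k\|\mathrm{grad} f(x_k)\|$; then
\begin{align*}
\alpha_{k+1}-\alpha_k\le\|\mathcal{T}_{x_{k+1}}^{x_k}\mathrm{grad} f(x_{k+1})-\mathrm{grad} f(x_k)\|\le Lt_k\|g_k\|\le Lt_k(\|\mathrm{grad} f(x_k)\|+\nu\|\mathrm{grad} f(x_k)\|),
\end{align*}
so $\alpha_{k+1}-\alpha_k\le\beta_k\alpha_k+\gamma_k$. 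The conditions in \eqref{b} must be checked: $\{\beta_k\}$ is bounded because $t_k$ is bounded by \eqref{stepsize Alg-RGDr}, $\sum_k\beta_k=\infty$ from $\sum_k t_k=\infty$, $\sum_k\beta_k\alpha_k^2=L\sum_k t_k\|\mathrm{grad} f(x_k)\|^2<\infty$ from the workhorse estimate, and $\sum_k\gamma_k=L\nu\sum_k t_k\|\mathrm{grad} f(x_k)\|\le L\nu(\sum_k t_k\|\mathrm{grad} f(x_k)\|^2)^{1/2}(\sum_k t_k)^{1/2}$ --- wait, this last bound fails since $\sum_k t_k=\infty$; instead I would note $t_k\|\mathrm{grad} f(x_k)\|\le\tfrac12(t_k^2+t_k\|\mathrm{grad} f(x_k)\|^2)$ requires $\sum t_k^2<\infty$, which is not assumed here, so the cleaner route is $\gamma_k=L\nu t_k\alpha_k\le L\nu(\sup_j t_j)\cdot(t_k\alpha_k^2)^{1/2}\cdot\dots$ --- the genuinely clean argument is to absorb $\gamma_k$ into $\beta_k\alpha_k$ directly by redefining $\beta_k:=L(1+\nu)t_k$ and $\gamma_k:=0$, since $Lt_k\|g_k\|\le L(1+\nu)t_k\alpha_k$. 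With $\gamma_k\equiv0$ all conditions in \eqref{b} hold trivially and Lemma~\ref{three sequences lemma} gives $\alpha_k\to0$. The main obstacle is precisely this bookkeeping in part~\ref{thm3-grad}: one must choose $\beta_k,\gamma_k$ so that \emph{all four} requirements in \eqref{b} hold simultaneously given only $\sum_k t_k=\infty$ and the boundedness of $t_k$ (not $\sum t_k^2<\infty$), which is why folding the relative error into the multiplicative term $\beta_k$ is the right move.
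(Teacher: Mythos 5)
Your proposal is correct and follows essentially the same route as the paper: the same one-step descent estimate $f(x_{k+1})\le f(x_k)-\tfrac{\delta t_k}{2}\|\mathrm{grad} f(x_k)\|^2$, the same workhorse bound $\sum_k t_k\|\mathrm{grad} f(x_k)\|^2<\infty$, the same contradiction with $\sum_k t_k=\infty$ for part~\ref{thm3-stationary point}, and exactly the paper's choice $\beta_k=L(1+\nu)t_k$, $\gamma_k=0$ in Lemma~\ref{three sequences lemma} for part~\ref{thm3-grad}. The one caveat is in part~\ref{thm3-stationary point}: ``directly extract a further subsequence'' does not work, since the indices along which $x_k\to\bar x$ and those realizing $\liminf_k\|\mathrm{grad} f(x_k)\|=0$ need not overlap; you must take the Lemma~\ref{stationary point lemma} route you mention, and verify its hypothesis \eqref{series is finite} by combining Lemma~\ref{R-dist} with \eqref{two inequality} to get $\sum_k\|x_{k+1}-x_k\|\cdot\|g_k\|\le\kappa(1+\nu)^2\sum_k t_k\|\mathrm{grad} f(x_k)\|^2<\infty$, exactly as the paper does in \eqref{common lemma}.
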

    
\begin{proof}
Using condition \eqref{stepsize Alg-RGDr}, we find $N\in\mathbb{N}$ so that $2-2\nu -Lt_k(1+\nu)^2 \ge \delta$ for all $k\ge N.$ Select such a natural number $k$ and use the Lipschitz continuity of $\mathrm{grad} f$ with constant $L$ to deduce from Assumption \ref{descent condition}, the relationship $x_{k+1}=\mathcal{R}_{x_k}(-t_k g_k)$, the estimates \eqref{es 1}--\eqref{es 3} and \eqref{two inequality} that
\begin{align}
&f(x_{k+1})\le f(x_k)-t_k\langle\mathrm{grad}f(x_k),g_k\rangle+\frac{Lt_k^2}{2} \|g_k\|^2\nonumber\\
    &\le f(x_k)-t_k\langle\mathrm{grad} f(x_k), g_k\rangle+Lt_k^2\langle\mathrm{grad} f(x_k), g_k\rangle-\frac{Lt_k^2(1-\nu^2)}{2} \|\mathrm{grad} f(x_k)\|^2\nonumber\\
    &\le f(x_k)-t_k(1-\nu)\|\mathrm{grad} f(x_k)\|^2+ Lt_k^2(1+\nu)\|\mathrm{grad} f(x_k)\|^2-\frac{Lt_k^2(1-\nu^2)}{2} \|\mathrm{grad} f(x_k)\|^2\nonumber\\
    &=f(x_k)-\frac{t_k}{2}\Big(2-2\nu-Lt_k (1+\nu)^2\Big) \|\mathrm{grad} f(x_k)\|^2\nonumber\\
    &\le f(x_k)-\frac{\delta t_k}{2}\|\mathrm{grad} f(x_k)\|^2\label{descent esti 1}\text{ for all }k\ge N.
\end{align}
It follows from the above that the sequence $\{f(x_k)\}_{k\ge N}$ is nonincreasing, and hence the condition $\inf_{k\in\mathbb{N}} f(x_k)>-\infty$ ensures the convergence of $\{f(x_k)\}$. This allows us to deduce from \eqref{descent esti 1} that
\begin{align}\label{seri tk fxk con}
    \frac{\delta}{2} \sum_{k=N}^\infty t_k\|\mathrm{grad} f(x_k)\|^2\le \sum_{K=N}^\infty (f(x_k)-f(x_{k+1}))\le f(x_k)-\inf_{k\in\mathbb{N}} f(x_k)<\infty.
\end{align}
Combining the latter with \eqref{two inequality} and $x_{k+1}=\mathcal{R}_{x_k}(-t_k g_k)$ gives us 
\begin{align}\label{common lemma}
 \sum_{k=1}^\infty \|x_{k+1}-x_k\| \cdot\|g_k\|  \le \sum_{k=1}^\infty \kappa t_k\|g_k\|^2\le \kappa (1+\nu)^2 \sum_{k=1}^\infty t_k\|\mathrm{grad} f(x_k)\|^2<\infty.
\end{align}

Now we are ready to verify all the assertions of the theorem. Let us start with \ref{thm3-stationary point} and show that $0$ in an accumulation point of $\{g_k\}$. Indeed, supposing the contrary gives us $\epsilon>0$ and $K\in\mathbb{N}$ such that $\|g_k\|\ge \epsilon$ for all $k\ge K$, and therefore 
\begin{align*}
\infty> \sum_{k=K}^\infty t_k\|g_k\|^2\ge \sum_{k=K}^\infty t_k=\infty,
\end{align*}
which is a contradiction justifying that $0$ is an accumulation point of $\{g_k\}$. If $\bar x$ is an accumulation point of $\{x_k\},$ then by Lemma~\ref{stationary point lemma} and \eqref{common lemma}, we find an infinite set $J\subset N$ such that $x_k\overset{J}{\rightarrow}\bar x$ and $g_k\overset{J}{\rightarrow}0.$ The latter being combined with \eqref{two inequality} gives us $\mathrm{grad} f(x_k)\overset{J}{\rightarrow}0$, which yields the stationary condition $\mathrm{grad} f(\bar x)=0.$

To verity \ref{thm3-f(x_k)}, let $\bar x$ be an accumulation point of $\{x_k\}$ and find an infinite set $J\subset \mathbb{N}$ such that $x_k\overset{J}{\rightarrow}\bar x.$ Combining this with the continuity of $f$ and the fact that $\{f(x_k)\}$ is convergent, we arrive at the equalities
\begin{align*}
f(\bar x)= \lim_{k\in J}f(x_k)= \lim_{k\in \mathbb{N}}f(x_k),
\end{align*}
which therefore justify assertion \ref{thm3-f(x_k)}.

To proceed with the proof of \ref{thm3-grad}, assume that $\mathrm{grad} f$ is Lipschitz continuous with constant $L>0$ and employ Lemma~\ref{three sequences lemma} with $\alpha_k:=\|\mathrm{grad} f(x_k)\|$, $\beta_k:=Lt_k(1+\nu)$, and $\gamma_k:=0$ for all $k\in\mathbb{N}$ to derive that $\mathrm{grad} f(x_k)\rightarrow0.$ Observe first that  condition \eqref{a} of this lemma is satisfied due to the the estimates
\begin{align*}
    \alpha_{k+1}-\alpha_k&=\|\mathrm{grad} f(x_{k+1})\|-\|\mathrm{grad} f(x^{k})\|\\
    &\le \|\mathcal{T}_{x_{k+1}}^{x_k}\mathrm{grad} f(x_{k+1})-\mathrm{grad} f(x_k)\|\\
    &=Lt_k\|g_k\|\le Lt_k (1+\nu)\|\mathrm{grad} f(x_k)\|=\beta_k\alpha_k.
\end{align*}
The conditions in \eqref{b} of the lemma are satisfied since $\{t_k\}$ is bounded, $\sum_{k=1}^\infty t_k =\infty$ by \eqref{stepsize Alg-RGDr}, $\gamma_k=0$, and
\begin{align*}
    \sum_{k=1}^\infty \beta_k\alpha_k^2=L(1+\nu)\sum_{k=1}^\infty t_k\|\mathrm{grad} f(x_k)\|^2<\infty,
\end{align*}
where the inequality follows from \eqref{seri tk fxk con}. Thus applying Lemma~\ref{three sequences lemma} gives us $\mathrm{grad} f(x_k)\rightarrow0$ as $k\to\infty$.
\end{proof}

The following theorem on the convergence of iterates to stationary points is ensured under the Riemannian KL property of the objective function, with convergence rates determined by the Riemannian KL exponent.

\begin{theorem}\label{theo Alg-RGDr KL}
    Under the same condition as in Theorem \ref{theo Alg-RGDr} and $\mathcal{S}$ denote the set of all accumulation points.   
   \begin{enumerate}[label=(\roman*)]
    \item\label{x_k conver} 
    Assume the stepsizes are bounded away from $0$, if $f$ satisfies the Riemannian KL property at some accumulation point in $\mathcal{S}$, then $\{x_k\}\rightarrow\bar x$.
   \item\label{conver rate}
   Assume the Riemannian KL property in \ref{x_k conver} holds with the desingularizing function $\varphi(t)=\frac{C}{\theta}t^{\theta}$ with $C>0$ and $\theta \in (0,1)$. Then either $\{x_k\}$ stops finitely at a stationary point, or the following convergence rates are achieved:
    \begin{itemize}
        \item if $\theta=1$, then there exists $k_1$ such that $x_k = \bar x$ for all $k > k_1$.
        \item If $\theta\in[\frac{1}{2},1)$, then there exists $C_r>0$ and $Q\in [0,1)$ such that for all $k$ 
        $$\|x_k-\bar x\| <C_rQ^k.$$
        \item If $\theta\in (0,\frac{1}{2})$, then there exists a positive constant $\title{C_r}>0$ such that for all $k$
        $$\|x_k-\bar x\|<\title{C_r}k^{-\frac{1-\theta}{2\theta-1}}.$$
    \end{itemize}
  \end{enumerate} 
\end{theorem}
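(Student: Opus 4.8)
The plan is to mirror the classical KL-based convergence argument (as in \cite{bolte2014proximal,huang2022riemannian}) but now in the setting of Algorithm~\ref{Alg-RGDr}, exploiting the sufficient-decrease inequality \eqref{descent esti 1} and the summability \eqref{common lemma} already established in the proof of Theorem~\ref{theo Alg-RGDr}. For part \ref{x_k conver}, I would first invoke Theorem~\ref{theo Alg-RGDr}\ref{thm3-grad} to get $\grad f(x_k)\to 0$ and \eqref{common lemma} to get $\|x_{k+1}-x_k\|\to 0$; together with compactness of the sublevel set (or boundedness of $\{x_k\}$, which we assume implicitly as in the companion theorems) this forces $\Scal$ to be a nonempty, compact, connected set on which $f$ is constant, say with value $f(\bar x)$. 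Then Lemma~\ref{RKL:compact} supplies a single desingularizing $\varphi$ valid uniformly on a neighborhood of $\Scal$. Using the stepsize lower bound $t_k\ge \underline t>0$, the decrease \eqref{descent esti 1} reads $f(x_k)-f(x_{k+1})\ge \tfrac{\delta \underline t}{2}\|\grad f(x_k)\|^2$. The standard telescoping trick: apply concavity of $\varphi$ to $\Delta_k:=\varphi(f(x_k)-f(\bar x))$, bound $\Delta_k-\Delta_{k+1}\ge \varphi'(f(x_k)-f(\bar x))(f(x_k)-f(x_{k+1}))$, then use the KL inequality $\varphi'(f(x_k)-f(\bar x))\ge \|\grad f(x_k)\|^{-1}$ and \eqref{two inequality} to obtain $\|x_{k+1}-x_k\|\le \kappa t_k \|g_k\|\le \kappa(1+\nu)\bar t\,\|\grad f(x_k)\| \lesssim \Delta_k-\Delta_{k+1}$ (after an AM--GM step to split $\sqrt{(\Delta_k-\Delta_{k+1})\cdot\|\grad f(x_k)\|^2 t_k}$). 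Summing gives $\sum_k \|x_{k+1}-x_k\|<\infty$, so $\{x_k\}$ is Cauchy and converges to the unique $\bar x\in\Scal$.

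For part \ref{conver rate}, I would specialize $\varphi(t)=\tfrac{C}{\theta}t^\theta$, so $\varphi'(t)=Ct^{\theta-1}$, and track the tail sum $S_k:=\sum_{i\ge k}\|x_{i+1}-x_i\|$, which dominates $\|x_k-\bar x\|$. The KL inequality becomes $\|\grad f(x_k)\|\ge (C(\theta))^{-1}(f(x_k)-f(\bar x))^{1-\theta}$ for a suitable constant; combined with the sufficient decrease $f(x_k)-f(x_{k+1})\gtrsim \|\grad f(x_k)\|^2$ and $\|x_{k+1}-x_k\|\lesssim t_k\|\grad f(x_k)\|$ (bounded $t_k$), one derives the recursive inequality $r_k \lesssim r_k - r_{k+1}$ type estimates for $r_k:=(f(x_k)-f(\bar x))$, leading to the three regimes. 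The case $\theta=1$ is degenerate: the KL inequality forces $\|\grad f(x_k)\|\ge C$ near $\bar x$ unless $\grad f(x_k)=0$, so the iterates must terminate finitely (here one should be slightly careful: with $\varphi'(t)=C$ constant, condition (iii) of Definition~\ref{RKL} still holds, and $f(x_k)\to f(\bar x)$ with $\grad f(x_k)\to 0$ is incompatible with $\varphi'(r_k)\|\grad f(x_k)\|\ge 1$ unless $r_k=0$ eventually). For $\theta\in[\tfrac12,1)$ one gets $r_{k+1}\le q\, r_k$ geometrically (after noting $r_k-r_{k+1}\gtrsim r_k^{2(1-\theta)}\ge r_k$ when $2(1-\theta)\le 1$, i.e. $\theta\ge\tfrac12$, for $r_k\le 1$), hence $r_k\le C q^k$, and then $S_k\lesssim \sqrt{r_k}$ or $S_k\lesssim \sum_{i\ge k}\sqrt{r_i}$ gives the exponential bound on $\|x_k-\bar x\|$ with $Q=\sqrt q$. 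For $\theta\in(0,\tfrac12)$, from $r_k-r_{k+1}\gtrsim r_k^{2(1-\theta)}$ with exponent $>1$ one gets the polynomial decay $r_k = O(k^{-1/(2(1-\theta)-1)}) = O(k^{-1/(1-2\theta)})$; propagating through $S_k\lesssim \sum_{i\ge k} r_i^{\,?}$ and choosing the exponent to match yields $\|x_k-\bar x\| = O(k^{-\frac{1-\theta}{2\theta-1}})$ — here I would have to be careful with the precise Lemma (a standard real-sequence lemma, e.g. the one underlying \cite{attouch2009convergence}) to get exactly that exponent, reconciling the $\|\grad f\|$-vs-$(f(x_k)-f(\bar x))$ accounting with the $\|x_{k+1}-x_k\|$ accounting.

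The main obstacle I anticipate is the bookkeeping that converts a rate on $r_k=f(x_k)-f(\bar x)$ (or on $\|\grad f(x_k)\|$) into a rate on $\|x_k-\bar x\|$ via the tail length $S_k$: one needs $\|x_{k+1}-x_k\|\lesssim t_k\|\grad f(x_k)\|$, then $\|\grad f(x_k)\|\lesssim (r_k-r_{k+1})^{1/2} t_k^{-1/2}$ from the decrease, and the lower bound on $t_k$ must be used to keep the constants uniform; combining these through the KL exponent to land on precisely $k^{-(1-\theta)/(2\theta-1)}$ requires invoking the right scalar-sequence lemma with the exponents aligned. A secondary, more cosmetic issue is the $\theta=1$ case, where $\varphi'$ is bounded and one must argue finite termination directly rather than through a contraction; and throughout I would need the uniform KL inequality on a neighborhood of the whole compact set $\Scal$ (Lemma~\ref{RKL:compact}) rather than at a single point, since a priori $\{x_k\}$ may approach $\Scal$ before settling on $\bar x$.
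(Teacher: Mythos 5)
Your proposal is correct in outline and follows the same KL-based template as the paper (uniform KL on the compact accumulation set via Lemma~\ref{RKL:compact}, sufficient decrease \eqref{descent esti 1}, concavity telescoping, and the Attouch--Bolte scalar-sequence lemma for the rates), but the bookkeeping differs in a substantive way. The paper first uses Assumption~\ref{Lips def} together with the isometry of the transport to derive the \emph{shifted} recursion $\|\grad f(x_k)\|\le \tilde L\, t_{k-1}\|g_{k-1}\|$, and then runs both the finite-length argument and the rate recursion directly on the tail sums of step lengths $\Delta_k=\sum_{i\ge k}t_i\|g_i\|$, arriving at $\Delta_k\le(\Delta_{k-1}-\Delta_k)+b_1(\Delta_{k-1}-\Delta_k)^{\theta/(1-\theta)}$ and invoking \cite{attouch2009convergence}. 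You instead telescope on $\|\grad f(x_k)\|$ itself: since \eqref{two inequality} makes $\|g_k\|$ and $\|\grad f(x_k)\|$ comparable, the chain $\Delta_k-\Delta_{k+1}\ge\varphi'(r_k)\,\tfrac{\delta t_k}{2}\|\grad f(x_k)\|^2\ge\tfrac{\delta t_k}{2}\|\grad f(x_k)\|$ is immediate (no AM--GM needed in part (i)), and with $t_k\ge\underline t$ you get summability of the step lengths directly; for the rates you run the recursion on $r_k=f(x_k)-f(\bar x)$ and convert at the end via $S_k\lesssim\varphi(r_k)\propto r_k^{\theta}$. Your route is arguably cleaner for part (i); the paper's route pays for the shifted recursion up front but then closes part (ii) without the $r_k\to\|x_k-\bar x\|$ conversion step you flag as the delicate point.

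Two remarks on that delicate point. First, your conversion does work: $r_k-r_{k+1}\gtrsim r_k^{2(1-\theta)}$ gives geometric decay of $r_k$ for $\theta\in[\tfrac12,1)$ and $r_k=O(k^{-1/(1-2\theta)})$ for $\theta\in(0,\tfrac12)$, and then $\|x_k-\bar x\|\le S_k\lesssim r_k^{\theta}=O(k^{-\theta/(1-2\theta)})$. Note, however, that this exponent does not literally match the one in the statement, $k^{-\frac{1-\theta}{2\theta-1}}$: that formula is the Attouch--Bolte rate written in their convention $\varphi(t)=ct^{1-\theta}$, whereas the theorem uses $\varphi(t)=\frac{C}{\theta}t^{\theta}$, so the stated exponent appears not to have been translated between conventions (as written it is even positive for $\theta<\tfrac12$). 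Your hesitation here is justified, and you should resolve it by fixing one convention before invoking the scalar lemma. Second, your treatment of $\theta=1$ (bounded $\varphi'$ forces $\|\grad f(x_k)\|$ bounded below unless $r_k=0$, hence finite termination) is the right argument and is more explicit than the paper's appeal to \cite{attouch2009convergence}.
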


\begin{proof}
    To prove \ref{x_k conver}.
    Since $0$ is an accumulation point of $\{g_k\}$, and the application of Lemma \ref{R-dist} and  implies that
\begin{align}\label{Ostrowski condition:2}
    \|x_{k+1}-x_k\| =\|\mathcal{R}_{x_k}(t_kg_k)-x_k \|\le \kappa t_k\|g_k\| \rightarrow 0.
\end{align}
Then by \cite{bolte2014proximal}, we konw that $\mathcal{S}$ is a compect set. Morever, since $f(x_k)$ is nonincreasing, and $f$ is continuous, thus $f$ has the same value at all the points in $\mathcal{S}$. Therefore, by Lemma \ref{RKL:compact}, there exists a single desingularizing function $\varphi$, for the Riemannian KL property of $f$ to hold at all the points in $\mathcal{S}$. Since $f(x_k)\rightarrow f(\bar x)$, $ \inf_{\bar x \in \mathcal{S}}\|x_k-\bar x\|\rightarrow 0$, thus there exists an $l>0$ such that 
\begin{align}\label{R-KL}
    \varphi^\prime(f(x_k)-f(\bar x))\ge \|\mathrm{grad} f(x_k)\|^{-1} \text{ for all }k\ge l.
\end{align}
By Assumption~\ref{Lips def}, we have 
\begin{align}\label{invertible T}
    \|\mathrm{grad} f(\mathcal{R}_{x_k}(-t_kg_k)-\mathcal{T}_{\mathcal{R}_{x_k}(-t_kg_k)}^{-\sharp} \mathrm{grad} f(x_k)\|\le Lt_k\|g_k\| \text{ for all }k\ge l.
\end{align}
Since the vector transport is isometric, it holds that 
\begin{align}\label{isometric}
\|\mathcal{T}_{x_{k+1}}^{-\sharp} \mathrm{grad} f(x_k)\|= \|\mathrm{grad} f(x_k)\|.
\end{align}
Combining the triangle inequality with \eqref{invertible T}, and \eqref{isometric}, we get
\begin{align*}
    \|\mathrm{grad} f(x_{k+1})\|
    &\le \|\mathcal{T}_{x_{k+1}}^{-\sharp} \mathrm{grad} f(x_k)\|+\|\mathrm{grad} f(x_{k+1})-\mathcal{T}_{x_{k+1}}^{-\sharp} \mathrm{grad} f(x_k)\|\\
    &\le \|\mathrm{grad} f(x_k)\|+Lt_k\|g_k\|.
\end{align*}
Suppose in addition that the sequence $\{t_k\}$ is bounded away from 0 (i.e. there is some $\bar t>0$ such that $t_k>\bar t$ for large $k\in \mathbb{N}$). Using the inequality \eqref{two inequality}, we get
\begin{align*}
    \|\mathrm{grad} f(x_{k+1})\|
    &\le \|\mathrm{grad} f(x_k)\|+Lt_k\|g_k\|\le \frac{1}{1-\nu}\|g_k\|+Lt_k\|g_k\|\\
    &\le (\frac{1}{1-\nu}\frac{1}{t_k}+L)t_k\|g_k\|\le (\frac{1}{1-\nu}\frac{1}{\bar t}+L)t_k\|g_k\|\\
    &\le \tilde{L}t_k\|g_k\|,
\end{align*}
where $\tilde{L}=\frac{1}{(1-\nu)}\frac{1}{\bar t}+L$ is a constant.
Thus, we have
\begin{align}\label{recursion}
    \|\mathrm{grad} f(x_k)\|\le \tilde{L}t_{k-1}\|g_{k-1}\| \text{ for all }k\ge l.
\end{align}
Inserting this into \eqref{R-KL} gives 
\begin{align}\label{R-KL inequality}
    \varphi^\prime(f(x_k)-f(\bar x))\ge \tilde{L}^{-1}(t_{k-1}\|g_{k-1}\|)^{-1} \text{ for all }k\ge l.
\end{align}
Moreover, it follows from \eqref{two inequality} and \eqref{descent esti 1} that 
\begin{align}
    f(x_{k+1})\le f(x_k)-\frac{\delta t_k}{2(1+\nu)^2}\|g_k\|^2,
\end{align}
and the concavity of $\varphi$ yields that
\begin{align}
   \varphi (f(x_k)-f(\bar x))- \varphi (f(x_{k+1})-f(\bar x))
   &\ge \varphi ^\prime(f(x_k)-f(\bar x))(f(x_k)-f(x_{k+1})) \nonumber\\
   &\ge \|\mathrm{grad} f(x_k)\|^{-1} \frac{\delta}{2(1+\nu)^2}t_k\|g_k\|^2 \nonumber\\
   &\ge \tilde{L}^{-1}\frac{\delta}{2(1+\nu)^2}\frac{t_k\|g_k\|^2}{t_{k-1}\|g_{k-1}\|}\label{varphi ineq}
\end{align}
For convenience, we define for all $p,q\in \mathbb{N}$ and $\bar x$ the following quantities 
$\Delta_{p,q}:=\varphi (f(x_p)-f(\bar x))- \varphi (f(x_q)-f(\bar x))$ and $M:=\frac{2(1+\nu)^2 \tilde{L}}{\delta}\in (0,\infty)$.
Consequently, \eqref{varphi ineq} is equivalent to
\begin{align}
    \Delta_{k,k+1}\ge \frac{t_k\|g_k\|^2}{M t_{k-1}\|g_{k-1}\|},
\end{align}
for all $k>l$ and hence
\begin{align*}
    t_k\|g_k\|^2\le M \Delta_{k,k+1} t_{k-1}\|g_{k-1}\|.
\end{align*}
Using the fact that the AM-GM inequality, we infer 
\begin{align}\label{AM-GM}
    2t_k\|g_k\|=2\sqrt{t_k}\sqrt{t_k\|g_k\|^2}\le 2\sqrt{t_{k-1}\|g_{k-1}\|M\Delta_{k,k+1}}\le t_{k-1}\|g_{k-1}\|+M\Delta_{k,k+1}.
\end{align}
Let us now prove that for all $k>l$ the following inquality holds 
\begin{align*}
    \sum_{i=l+1}^k t_k\|g_k\|\le t_l\|g_l\|+M\Delta_{l+1,k+1}.
\end{align*}
Summing up \eqref{AM-GM} for $i=l+1,\cdots,k$ yields
\begin{align*}
    2\sum_{i=l+1}^k t_i\|g_i\|
    &\le \sum_{i=l+1}^k t_{i-1}\|g_{i-1}\|+M\sum_{i=l+1}^k\Delta_{i,i+1}\\
    &\le \sum_{i=l+1}^k t_i\|g_i\|+t_l\|g_l\|+M\sum_{i=l+1}^k\Delta_{i,i+1}\\
    &= \sum_{i=l+1}^k t_i\|g_i\|+t_l\|g_l\|+M\Delta_{l+1,k+1},
\end{align*}
where the last inequality follows from the fact that $\Delta_{p,q}+\Delta_{q,r}=\Delta_{p,r}$ for all $p,q,r\in \mathbb{N}$. Since $\varphi>0$, we thus have for all $k>l$ that
\begin{align*}
    \sum_{i=l+1}^k t_i\|g_i\|\le t_l\|g_l\|+M\varphi(f(x_{l+1})-f(\bar x)).
\end{align*}
Combining Assumption \ref{R-dist} and \eqref{Ostrowski condition:2}, this easily shows that the sequence $\{x_k\}_{k\in \mathbb{N}}$ has finite length, that is,
\begin{align}
    \sum_{k=1}^{\infty} \|x_{k+1}-x_k\| \le \kappa \sum_{k=1}^{\infty} t_k\|g_k\|\le \infty.
\end{align}
The latter means that $\{x_k\}$ is a Cauchy sequence, and tell us that $\{x_k\}$ is converges to $\bar x$.

Let us now verify assertion \ref{conver rate} of the theorem. Applying $\varphi(t)=\frac{C}{\theta}t^{\theta}$ to \eqref{varphi ineq} yields
\begin{align}\label{apply varphi}
    t_k\|g_k\|^2\le t_{k-1}\|g_{k-1}\| \frac{2(1+\nu)^2\tilde{L}C}{\delta \theta}\Big((f(x_k)-f(\bar x))^{\theta}- (f(x_{k+1})-f(\bar x))^{\theta}\Big),
\end{align}
for all $k\ge l$. Taking square root to the both sides of \eqref{apply varphi} and using the AM-GM inequality, we have 
\begin{align}
    2t_k\|g_k\|=t_{k-1}\|g_{k-1}\|+\frac{2(1+\nu)^2\tilde{L}C}{\delta \theta}\Big((f(x_k)-f(\bar x))^{\theta}- (f(x_{k+1})-f(\bar x))^{\theta}\Big),
\end{align}
for all $k\ge l$. Summing the both sides from $p>l$ to $\infty$ yields
\begin{align}\label{sum-1}
    \sum_{k=p}^{\infty} t_k\|g_k\|\le t_{p-1}\|g_{p-1}\|+\frac{2(1+\nu)^2\tilde{L}C}{\delta \theta}(f(x_p)-f(\bar x))^{\theta}.
\end{align}
By \eqref{R-KL}, we have $\frac{1}{C}(f(x_k)-f(\bar x))^{1-\theta}\le \|\mathrm{grad}f(x_k)\|$. Combining this inequality with \eqref{recursion} yields
\begin{align}\label{theta equ}
    \frac{1}{C}(f(x_k)-f(\bar x))^{1-\theta}\le \tilde{L}t_{k-1}\|g_{k-1}\|.
\end{align}
It follows from \eqref{sum-1} and \eqref{theta equ} that 
\begin{align}\label{sum-2}
    \sum_{k=p}^{\infty} t_k\|g_k\|\le t_{k-1}\|g_{k-1}\|+\frac{2(1+\nu)^2\tilde{L}C}{\delta \theta}(CLt_{k-1}\|g_{k-1}\|)^{\frac{\theta}{1-\theta}}, \text{ for all }k\ge l.
\end{align}
Define $\Delta_k=\sum_{i=k}^{\infty}\|g_i\|$. Therefore, inequality \eqref{sum-2} becomes
\begin{align}\label{sum-3}
    \Delta_k\le (\Delta_{k-1}-\Delta_k)+b_1(\Delta_{k-1}-\Delta_k)^{\frac{\theta}{1-\theta}},\text{ for all }k\ge l,
\end{align}
where $b_1=\frac{2(1+\nu)^2\tilde{L}C}{\delta \theta}(C\tilde{L})^{\frac{\theta}{1-\theta}}$. Noting that \eqref{sum-3} has the same form as \cite{attouch2009convergence}, we have follow the same derivations in \cite{attouch2009convergence} and show that  
\begin{itemize}
    \item if $\theta=1$, then Algorithm~\ref{Alg-RGDr} terminates in finite steps,\\
    \item if $\theta\in [\frac{1}{2},1)$, then $\Delta_k<C_rQ^k$ for $C_r>0$ and $Q\in [0,1)$,\\
    \item if $\theta\in (0,\frac{1}{2})$, then $\Delta_k<\title{C_r}k^{-\frac{1-\theta}{2\theta-1}}$ for $\title{C_r}>0$.
\end{itemize}
It only remains to show that $\mathrm{dist}(x_k,\bar x)<C_p \Delta_k$ for a positive constant $C_p$. This can be obtained by  
\begin{align*}
    \|x_k-\bar x\|\le \sum_{i=k}^{\infty} \|x_{i+1}-x_i \|\le \kappa \sum_{i=k}^{\infty}t_i\|g_i\|=\kappa \Delta_k,
\end{align*}
where the first inequality is by triangle inequality and the second inequality is from Assumption \ref{assu desi}.
This completes the proof.
\end{proof}

\section{Applications}\label{sec app}
In this section, we apply the inexact Riemannian gradient method for $\mathcal{C}^1$-smooth nonconvex optimization, as developed in Sect.~\ref{sec inexact RGD}, to design and justify two novel gradient-based inexact methods. 

\subsection{Riemannian sharpness-aware minimization}\label{app:rsam}
Sharpness-aware minimization (SAM) is a recently proposed gradient-based optimization method that significantly enhances the generalization of deep neural networks \cite{foret2020sharpness}. While the fundamental convergence properties of SAM have been established in \cite{khanh2024fundamental}, these results are limited to Euclidean space. In this subsection, we introduce a novel construction of Sharpness-Aware Minimization on Riemannian manifolds and present the convergence analysis of RSAM from an inexact gradient perspective.

\begin{algorithm}
\caption{Riemannian Sharpness-Aware Minimization (RSAM)}\label{RSAM}
\begin{algorithmic}[1]
\STATE Choose some initial point $x_1\in\mathcal{M},$ sequence of perturbation radii $\{\rho_k\}\subset (0,\infty)$, and sequence of stepsizes $\{t_k\}\subset(0,\infty).$ For $k=1,2,\ldots,$ do the following
\STATE Set $x_{k+1}=\mathcal{R}_{x_k}(-t_k \mathcal{T}_{x_k^{adv}}^{x_k} \mathrm{grad} f(x_k^{adv})),$ where $x_k^{adv}=\mathcal{R}_{x_k}(\rho_k \frac{\mathrm{grad} f(x)}{\|\mathrm{grad} f(x)\|})$.   
\end{algorithmic}
\end{algorithm}

In fact, we show that Algorithm~\ref{RSAM} is a special case of IRGD framework presented in Algorithm~\ref{Alg-RGD}. As a result, it inherits all the convergence properties established in Theorem \ref{convergence Alg-RGD} and \ref{convergence iterates}. The following theorem verifies that Algorithm~\ref{RSAM} satisfies the inexact unnormalized gradient condition given in \eqref{unnormalized-r}.

\begin{theorem} Suppose that Assumption \ref{descent condition},\ref{lower bounded}, \ref{Lips def} and \ref{assu desi} holds. Let $f:\mathcal{M}\rightarrow\mathbb{R}$ be a $\mathcal{C}^{1,L}$ function, and let $\{x_k\}$ be generated by Algorithm~\ref{RSAM} with the parameters
\begin{align}\label{parameter SAM + VASSO}
 \sum_{k=1}^\infty t_k=\infty,\;t_k\downarrow0, \sum_{k=1}^\infty t_k \rho_k<\infty,\;\limsup \rho_k<\frac{2}{L}.    
\end{align}
Then all the convergence properties presented in Theorem~\ref{convergence Alg-RGD} hold.
\end{theorem}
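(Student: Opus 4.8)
The plan is to show that Algorithm~\ref{RSAM} is a particular instance of Algorithm~\ref{Alg-RGD} by identifying the inexact gradient $g_k := \mathcal{T}_{x_k^{adv}}^{x_k}\,\mathrm{grad} f(x_k^{adv})$ and verifying that it satisfies the unnormalized condition \eqref{unnormalized-r} with an error sequence $\epsilon_k$ that, together with the step sizes $t_k$, meets the requirements \eqref{parameter general}. Once that identification is made, all conclusions of Theorem~\ref{convergence Alg-RGD} (and, given Assumption~\ref{assu desi}, also of Theorem~\ref{convergence iterates}) transfer verbatim, so the bulk of the work is the error estimate.

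First I would bound $\|g_k - \mathrm{grad} f(x_k)\|$. Writing $\eta_k := \rho_k\,\mathrm{grad} f(x_k)/\|\mathrm{grad} f(x_k)\|\in T_{x_k}\mathcal{M}$ (so $\|\eta_k\| = \rho_k$) and $x_k^{adv} = \mathcal{R}_{x_k}(\eta_k)$, the quantity $g_k = \mathcal{T}_{x_k^{adv}}^{x_k}\mathrm{grad} f(x_k^{adv})$ is exactly the vector transported back to $T_{x_k}\mathcal{M}$ appearing in Assumption~\ref{Lips def}. Indeed, the first inequality of that assumption, applied with $x = x_k$, $y = x_k^{adv}$, $\eta = \eta_k$, gives directly
\begin{align*}
\|g_k - \mathrm{grad} f(x_k)\| = \|\mathcal{T}_{x_k^{adv}}^{x_k}\mathrm{grad} f(x_k^{adv}) - \mathrm{grad} f(x_k)\| \le L\|\eta_k\| = L\rho_k.
\end{align*}
Hence \eqref{unnormalized-r} holds with $\epsilon_k := L\rho_k$. (A small point to be careful about: if $\mathrm{grad} f(x_k) = 0$ then $x_k^{adv}$ as written is undefined; I would note that in that case one simply sets $x_k^{adv} = x_k$, so $g_k = 0 = \mathrm{grad} f(x_k)$ and the bound is trivial, or equivalently the algorithm has already reached a stationary point.)

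Next I would translate the parameter hypotheses. From \eqref{parameter SAM + VASSO} we have $\sum_k t_k = \infty$ and $t_k\downarrow 0$, which are the first two conditions of \eqref{parameter general}. With $\epsilon_k = L\rho_k$ we get $\sum_k t_k\epsilon_k = L\sum_k t_k\rho_k < \infty$, the third condition. Finally $\limsup_k \epsilon_k = L\limsup_k \rho_k < L\cdot\frac{2}{L} = 2$, which is the last condition $\limsup\epsilon_k < 2$. Thus $\{x_k\}$ generated by Algorithm~\ref{RSAM} coincides with a sequence generated by Algorithm~\ref{Alg-RGD} under exactly the hypotheses \eqref{parameter general}, and Theorem~\ref{convergence Alg-RGD} applies, yielding $\mathrm{grad} f(x_k)\to 0$, stationarity of every accumulation point, and $f(x_k)\to f(\bar x)$ for any accumulation point $\bar x$.

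The main (and only real) obstacle is the error bound in the second paragraph; everything else is bookkeeping about summability of sequences. That bound is immediate \emph{provided} the retraction/vector-transport pairing used in Algorithm~\ref{RSAM} is the one for which Assumption~\ref{Lips def} is stated — in particular $\mathcal{T}$ must be the same vector transport compatible with $\mathcal{R}$ that appears in that assumption, so that transporting $\mathrm{grad} f(x_k^{adv})$ from $x_k^{adv} = \mathcal{R}_{x_k}(\eta_k)$ back to $x_k$ is precisely the operation $\mathcal{T}_{\eta_k}^{\sharp}$ (or its analogue) controlled by the Lipschitz inequality. I would make this compatibility explicit at the start of the proof. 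If one also wants the iterate-convergence conclusions of Theorem~\ref{convergence iterates}, the extra hypothesis \eqref{desing condi} must be checked with $\epsilon_k = L\rho_k$, i.e. $\sum_k t_k\big(\varphi'(\sum_{i\ge k} L t_i\rho_i)\big)^{-1} < \infty$; since Assumption~\ref{assu desi} is already assumed and $f$ is assumed to enjoy the Riemannian KL property on $\mathcal{S}$, this is the only genuinely additional requirement, and I would either impose it or simply state that it is inherited once assumed.
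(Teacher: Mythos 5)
Your proposal is correct and follows essentially the same route as the paper: define $g_k=\mathcal{T}_{x_k^{adv}}^{x_k}\mathrm{grad} f(x_k^{adv})$, use Assumption~\ref{Lips def} to get $\|g_k-\mathrm{grad} f(x_k)\|\le L\rho_k$, set $\epsilon_k=L\rho_k$, and observe that \eqref{parameter SAM + VASSO} then yields \eqref{parameter general} so that Theorem~\ref{convergence Alg-RGD} applies. In fact you are somewhat more careful than the paper, which silently ignores the degenerate case $\mathrm{grad} f(x_k)=0$ and also claims the conclusions of Theorem~\ref{convergence iterates} without verifying the additional condition \eqref{desing condi} — both points you correctly flag.
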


\begin{proof}
Considering Algorithm~\ref{RSAM} and defining $$g_k = \mathcal{T}_{x_k^{adv}}^{x_k} \mathrm{grad} f(\mathcal{R}_{x_k}(\rho_k \frac{\mathrm{grad} f(x)}{\|\mathrm{grad} f(x)\|})),$$ we deduce that
\begin{align*}
    \|g_k-\mathrm{grad} f(x_k)\|
    &\le L\|\mathcal{R}_{x_k}(\rho_k \frac{\mathrm{grad} f(x)}{\|\mathrm{grad} f(x)\|})-x_k\|\\
    &=L\|\rho_k\frac{\mathrm{grad} f(x)}{\|\mathrm{grad} f(x)\|}\| \le L\rho_k.
\end{align*}
Therefore, Algorithm~\ref{RSAM} is a specialization of Algorithm~\ref{Alg-RGD} with $\epsilon_k = L\rho_k.$ Combining this with \eqref{parameter SAM + VASSO} also gives us \eqref{parameter general}, thereby verifying all the assumptions and conditions in Theorem \ref{convergence Alg-RGD} and \ref{convergence iterates}. Consequently, all the convergence properties outlined in Theorem \ref{convergence Alg-RGD} and \ref{convergence iterates} hold for Algorithm~\ref{RSAM}.
\end{proof}

This convergence analysis ensures the last-iterate convergence of RSAM, offering a stronger conclusion than the average-iterate convergence discussed in \cite{yun2024riemannian}, despite the complexity guarantee provided therein.

\subsection{Riemannian extragradient}\label{app:reg}
Riemannian extragradient (REG) was initially proposed as a novel first-order method designed for monotone Riemannian variational inequality problems, with both last-iterate and average-iterate convergence established under strong assumptions \cite{hu2024extragradient}. In contrast, in this subsection, we propose a novel REG method focused on analyzing nonconvex smooth problems from an inexact gradient perspective, offering convergence analysis under more general assumptions.

\begin{algorithm}
\caption{Riemannian Extragradient} \label{REG}
\begin{algorithmic}[1]
\STATE
        Choose some initial point $x_1\in\mathcal{M},$ a sequence of perturbation radii $\{\rho_k\}\subset [0,\infty)$, and a sequence of stepsizes $\{t_k\}\subset[0,\infty).$ For $k=1,2,\ldots,$ do the following
\STATE
        Set $x_{k+1}=\mathcal{R}_{x_k}(-t_k \mathcal{T}_{x_k^{adv}}^{x_k} \mathrm{grad} f (x_k^{adv}))$,
        where $x_k^{adv}=\mathcal{R}_{x_k}(-\rho_k \mathrm{grad} f(x_k)).$
\end{algorithmic}
\end{algorithm}

It is demonstrated that Algorithm~\ref{REG} is a specific instance of IRGDr utilizing relative errors, as outlined in Algorithm~\ref{Alg-RGDr}. Consequently, it inherits all the convergence properties detailed in Theorem \ref{theo Alg-RGDr} and \ref{theo Alg-RGDr KL}. The following theorem verifies that Algorithm~\ref{REG} satisfies the inexact normalized gradient condition given in \eqref{normalized-r}.

\begin{theorem}
Suppose that Assumption \ref{descent condition},\ref{lower bounded}, \ref{Lips def} and \ref{assu desi} holds. Let $f:\mathcal{M}\rightarrow\mathbb{R}$ be a ${\mathcal{C}}^1$-smooth function satisfying the descent condition with some constant $L>0.$ Let $\{x_k\}$ be the sequence generated by Algorithm~\ref{REG} with $\rho_k\le \frac{\nu}{L}$ for some $\nu \in[0,1)$ and with the stepsize satisfying \eqref{stepsize Alg-RGDr}. Then all the convergence properties in Theorem \ref{theo Alg-RGDr} and \ref{theo Alg-RGDr KL} hold. 
\end{theorem}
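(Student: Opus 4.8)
The plan is to exhibit Algorithm~\ref{REG} as a particular instance of the IRGDr scheme of Algorithm~\ref{Alg-RGDr}, after which Theorems~\ref{theo Alg-RGDr} and~\ref{theo Alg-RGDr KL} apply directly. Concretely, I would set
\[
g_k := \mathcal{T}_{x_k^{adv}}^{x_k}\,\mathrm{grad} f(x_k^{adv}),\qquad x_k^{adv}=\mathcal{R}_{x_k}(-\rho_k\,\mathrm{grad} f(x_k)),
\]
so that the update $x_{k+1}=\mathcal{R}_{x_k}(-t_k g_k)$ of Algorithm~\ref{REG} is literally the update of Algorithm~\ref{Alg-RGDr}; note that $g_k\in T_{x_k}\mathcal{M}$ since $\mathcal{T}_{\cdot}^{x_k}$ maps into the tangent space at $x_k$. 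It then remains only to verify that this $g_k$ obeys the normalized inexact gradient condition \eqref{normalized-r} with the prescribed $\nu$.

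For this, observe that the ascent step $-\rho_k\,\mathrm{grad} f(x_k)$ lies in $T_{x_k}\mathcal{M}$, so Assumption~\ref{Lips def} applies with $x=x_k$, $\eta=-\rho_k\,\mathrm{grad} f(x_k)$ and $y=x_k^{adv}=\mathcal{R}_{x_k}(\eta)$, giving
\[
\|g_k-\mathrm{grad} f(x_k)\|=\big\|\mathcal{T}_{x_k^{adv}}^{x_k}\mathrm{grad} f(x_k^{adv})-\mathrm{grad} f(x_k)\big\|\le L\|\eta\|=L\rho_k\|\mathrm{grad} f(x_k)\|.
\]
Since the hypothesis gives $\rho_k\le\nu/L$, this yields $\|g_k-\mathrm{grad} f(x_k)\|\le\nu\|\mathrm{grad} f(x_k)\|$, i.e.\ exactly \eqref{normalized-r}. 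Hence Algorithm~\ref{REG} is the specialization of Algorithm~\ref{Alg-RGDr} with this $g_k$ and relative error $\nu\in[0,1)$; combined with the assumed stepsize rule \eqref{stepsize Alg-RGDr} and Assumptions~\ref{descent condition}, \ref{lower bounded}, \ref{Lips def}, \ref{assu desi}, all hypotheses of Theorems~\ref{theo Alg-RGDr} and~\ref{theo Alg-RGDr KL} are met, so all their conclusions transfer to Algorithm~\ref{REG}.

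There is no substantive obstacle here; the only point requiring care is the bookkeeping that distinguishes REG from RSAM. In RSAM the ascent step is \emph{normalized}, so $\|\eta\|=\rho_k$ is deterministic and one lands in the \emph{unnormalized} condition \eqref{unnormalized-r} with $\epsilon_k=L\rho_k$, whereas in REG the ascent step $-\rho_k\,\mathrm{grad} f(x_k)$ is \emph{unnormalized}, so $\|\eta\|=\rho_k\|\mathrm{grad} f(x_k)\|$ scales with the gradient and one lands in the \emph{normalized} condition \eqref{normalized-r}; this is precisely why REG must be analyzed through Algorithm~\ref{Alg-RGDr} rather than Algorithm~\ref{Alg-RGD}. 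One should also check that the prescribed interval for $\rho_k$ is consistent with the constraints used in the analysis of Algorithm~\ref{Alg-RGDr}, but this is immediate since only $\nu\in[0,1)$ and \eqref{stepsize Alg-RGDr} are needed there.
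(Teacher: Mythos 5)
Your proposal is correct and follows the same route as the paper: define $g_k=\mathcal{T}_{x_k^{adv}}^{x_k}\mathrm{grad} f(x_k^{adv})$, apply the retraction-Lipschitz bound of Assumption~\ref{Lips def} with $\eta=-\rho_k\,\mathrm{grad} f(x_k)$ to get $\|g_k-\mathrm{grad} f(x_k)\|\le L\rho_k\|\mathrm{grad} f(x_k)\|\le\nu\|\mathrm{grad} f(x_k)\|$, and then invoke Theorems~\ref{theo Alg-RGDr} and~\ref{theo Alg-RGDr KL}. Your added remark contrasting the normalized ascent step of RSAM with the unnormalized one of REG correctly explains why the two methods fall under different inexactness conditions.
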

\begin{proof} Defining $g_k:=\mathcal{T}_{x_k^{adv}}^{x_k} \mathrm{grad} f(\mathcal{R}_{x_k}(-\rho_k \mathrm{grad} f(x_k)))$ and utilizing  $\rho_k \le \frac{\nu}{L}$, we obtain
\begin{align*}
    \|g_k-\mathrm{grad} f(x_k)\|
    &=\|\mathcal{T}_{x_k^{adv}}^{x_k} \mathrm{grad} f(\mathcal{R}_{x_k}(-\rho_k \mathrm{grad} f(x_k)))-\mathrm{grad} f(x_k)\|\\
    &\le L\|-\rho_k \mathrm{grad} f(x_k)\|\le \nu \|\mathrm{grad} f(x_k)\|,
\end{align*}
which verifies the inexact condition in Step~2 of Algorithm~\ref{Alg-RGDr}. Therefore, all the convergence properties in Theorem \ref{theo Alg-RGDr} and \ref{theo Alg-RGDr KL} hold for Algorithm~\ref{REG}. 
\end{proof}

In \cite{hu2024extragradient}, the REG method was proposed without any numerical experimental verification. In this paper, I validate the efficiency of this method on the PCA problem in Sect.~\ref{pca}.

\section{Numerical Experiments}\label{sec numerical}
In this section, we present numerical experiments demonstrating the efficiency of our proposed IRGD method, implemented by Algorithm \ref{Alg-RGDr}, in comparison with the RGD method \eqref{eq:rgd} for solving the low-rank matrix completion (MC) problem \cite{vandereycken2013low,sakai2021sufficient}. Additionally, we compare the performance of our REG method, proposed by Algorithm \ref{REG}, with the RGD method \eqref{eq:rgd} in the context of principal component analysis (PCA) \cite{li2023stochastic,zhou2019faster}.  All numerical experiments were implemented in Python 3.7.6 on a laptop with Intel(R) Xeon(R) Gold 6230 CPU @ 2.10GHz.

In all of our tests, the initial iterative point is randomly generated. We set the diminishing stepsize $t_k=\frac{1}{k^{\alpha}}$, with $\alpha \in (0,1)$, which satisfies the conditions in \eqref{diminishing}. Additionally, we perform a line search using the Pymanopt 0.2.6rc1 package \cite{townsend2016pymanopt} without any modifications. For the MC problem, we chose $\alpha = 0.1$, while for the PCA problem, $\alpha = 0.75$. To generate the inexactness for testing purposes, we create an inexact gradient $g_k$ by adding a random vector with the norm $\nu \delta_k$ to the exact gradient $\mathrm{grad} f(x_k)$. To ensure that the controlled errors between the exact and inexact gradients do not decrease too quickly, we select $\delta_k=(k+1)^{-p}$ with $ p\ge 1$, where we set $p=2.1$. If the stopping condition
$$\|\mathrm{grad}f(X_k)\|_{X_k}<10^{-6}$$
was satisfied, we determined that a sequence had converged to an optimal solution. Detailed information on the numerical experiments and the achieved numerical results is presented in Tables \ref{tab_mc}, \ref{tab_pca} and \ref{tab_mnist}. In these tables, the terms 'iter,', 'time,' denote the total number of the iterations and the CPU time, respectively, with 10000 being the maximum number of iterations.

\subsection{Low-rank matrix completion}\label{mc}
Low-rank matrix completion (MC) aims to recover a low-rank matrix $X \in \mathbb{R}^{m\times n}$ from a subset $\Omega$ of the complete set of entries $\{1,\cdots,m\}\times \{1,\cdots,n\}$. For $A\in \mathbb{R}^{m\times n}$, 
$$ \min_{X\in M_k} \|P_{\Omega}(X-A)\|_F^2,$$
where $M_k:=\{X\in \mathbb{R}^{m\times n}: \mathrm{rank}(X)=k\}$ is a FixedRankEmbedded manifold, $\|\cdot\|_F$ denotes the Frobenius norm and $ P_{\Omega}$ is a linear operator that extracts entries in $\Omega$ and fills the entries not in $\Omega$ with zeros.
In our experiment, we set  different dimension $m\times n$, and $\Omega$ contained each pair $(i,j) \in \{1,\cdots,m\}\times \{1,\cdots,n\}$ with probability $\frac{1}{2}$. Moreover, $A$ is a randomly generated data matrix. 


\begin{table}[h]
\centering
\setlength{\tabcolsep}{5pt}
\caption{Numerical results of MC problem with different $m,n,r$.}\label{tab_mc}
\begin{tabular}{|c|c|cc|cc|cc|}
\cline{1-8}
\multirow{2}{*}{Solver} & \multirow{2}{*}{$\rho$} & \multicolumn{2}{c}{$m=n=20, r=8$} & \multicolumn{2}{c}{$m=n=50, r=20$} & \multicolumn{2}{c}{$m=n=100, r=40$} \\ \cline{3-8} 
 &  & Iter & Time & Iter & Time & Iter & Time\\ \cline{1-8} 
 RGD-diminishing & - & 1133 & 1.4849 & 641  & 1.2216 & 573 & 18.0899  \\\cline{1-2}
\multirow{4}{*}{REG-diminishing}
 & $10^{-8}$ & 1097 & 1.5469 & 633 & 1.2358 & 562 & 19.5612 \\
 & $10^{-5}$ & 966 & 1.3282 & 690 & 1.3453 & 557 & 19.9368 \\
 & $10^{-3}$ & 952 & 1.3264 & 625 & 1.2311 & 574 & 20.6600 \\
 & $10^{-2}$ & 1191 & 1.6910 & 647 & 1.2683 & 589 & 22.7239 \\ \cline{1-8}
 RGD-line search & - & 528 & 0.8945 & 258  & 0.6841 & 236 & 11.4349\\ \cline{1-2}
\multirow{4}{*}{REG-line search}
 & $10^{-8}$ & 423 & 0.7656 & 247 & 0.6835 & 244 & 13.2515 \\
 & $10^{-5}$ & 466 & 0.8022 & 250 & 0.6880 & 236 & 11.8484 \\
 & $10^{-3}$ & 316 & 0.5403 & 257 & 0.6956 & 254 & 10.9358 \\
 & $10^{-2}$ & 495 & 0.8650 & 365 & 0.6956 & 453 & 17.4021 \\ \cline{1-8}
\end{tabular}
\end{table}

\begin{figure}
\centering  
\subfigure[]{
\includegraphics[width=4.3cm]{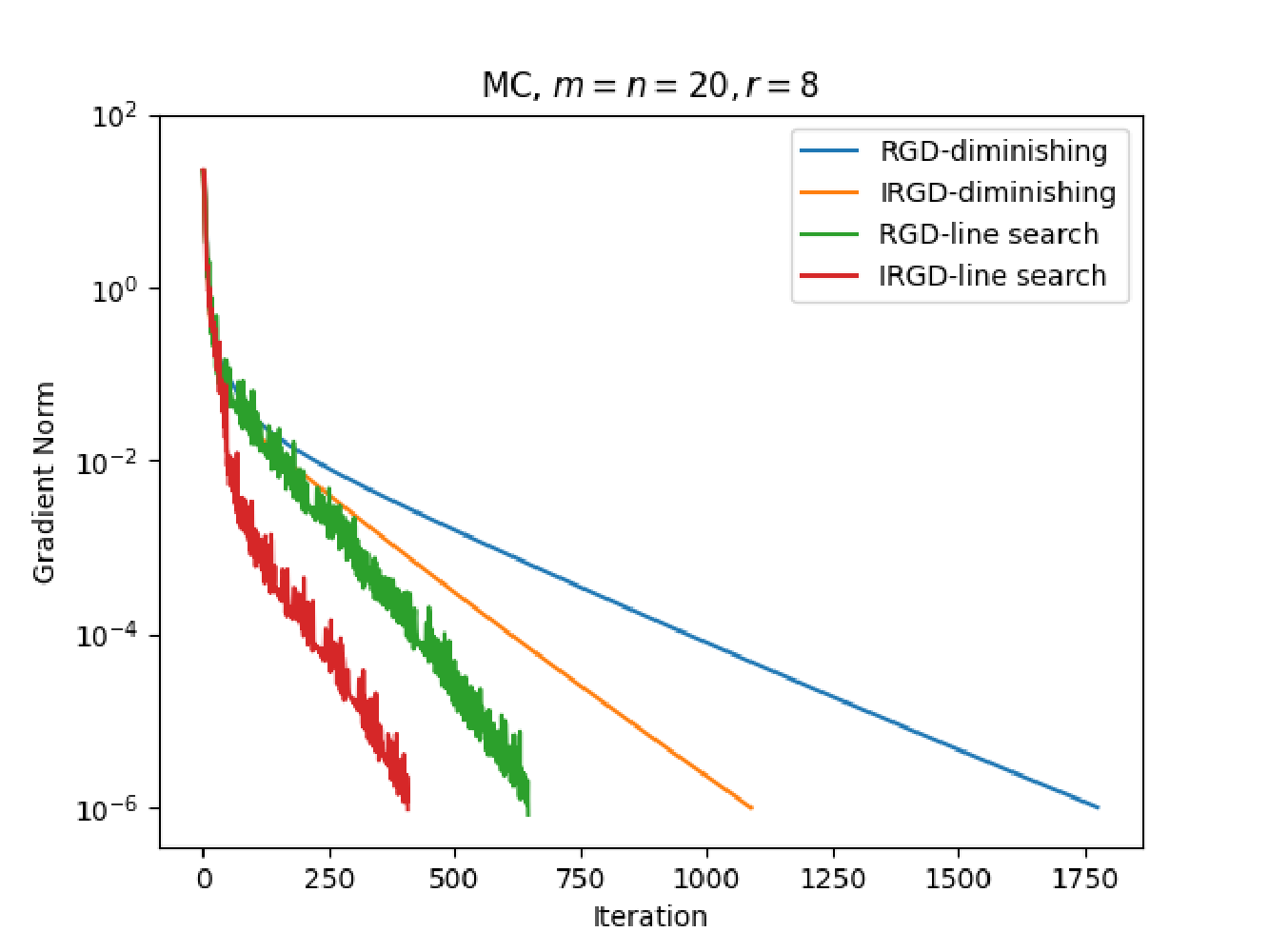}}
\subfigure[]{
\includegraphics[width=4.3cm]{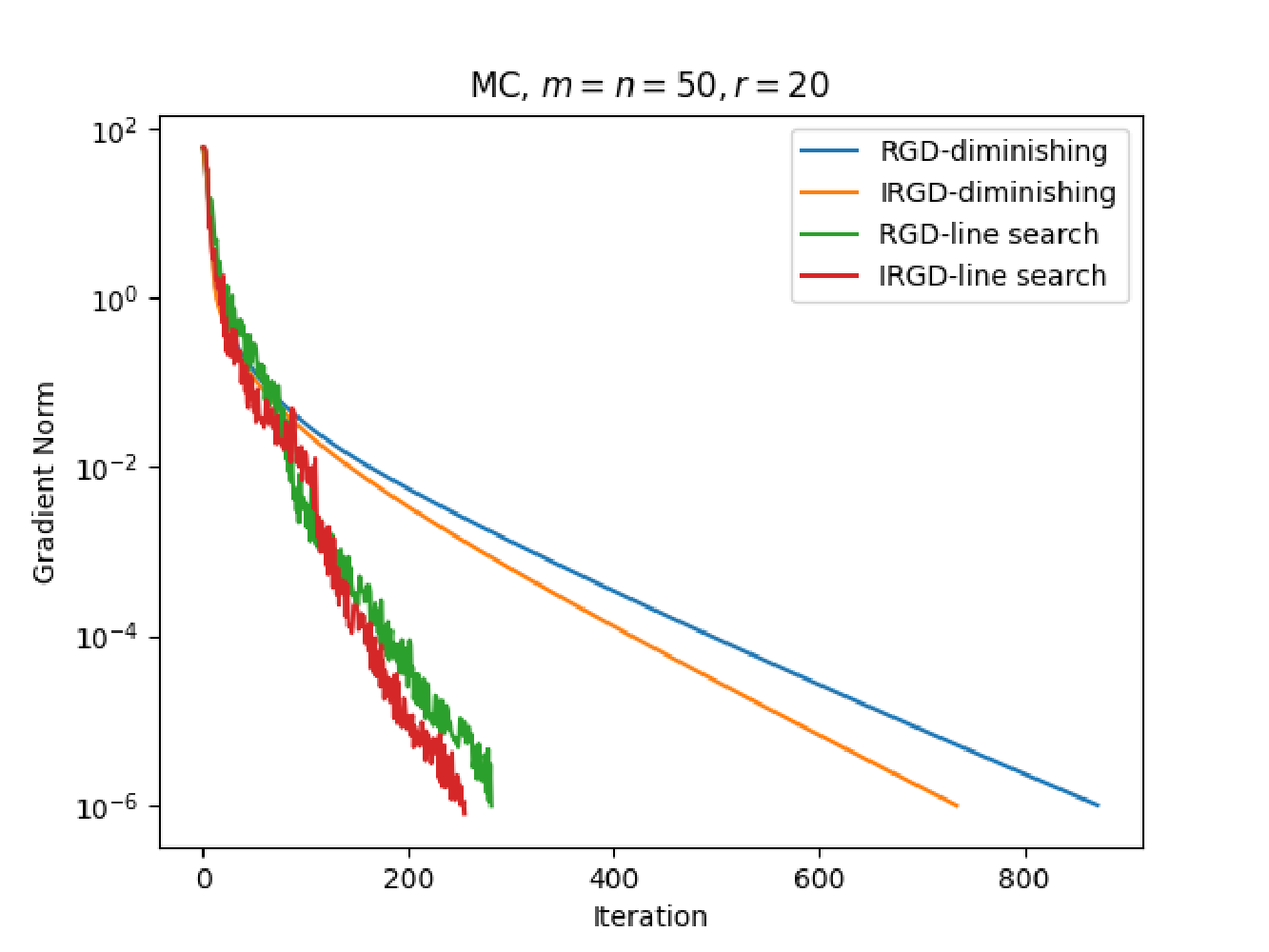}}
\subfigure[]{
\includegraphics[width=4.3cm]{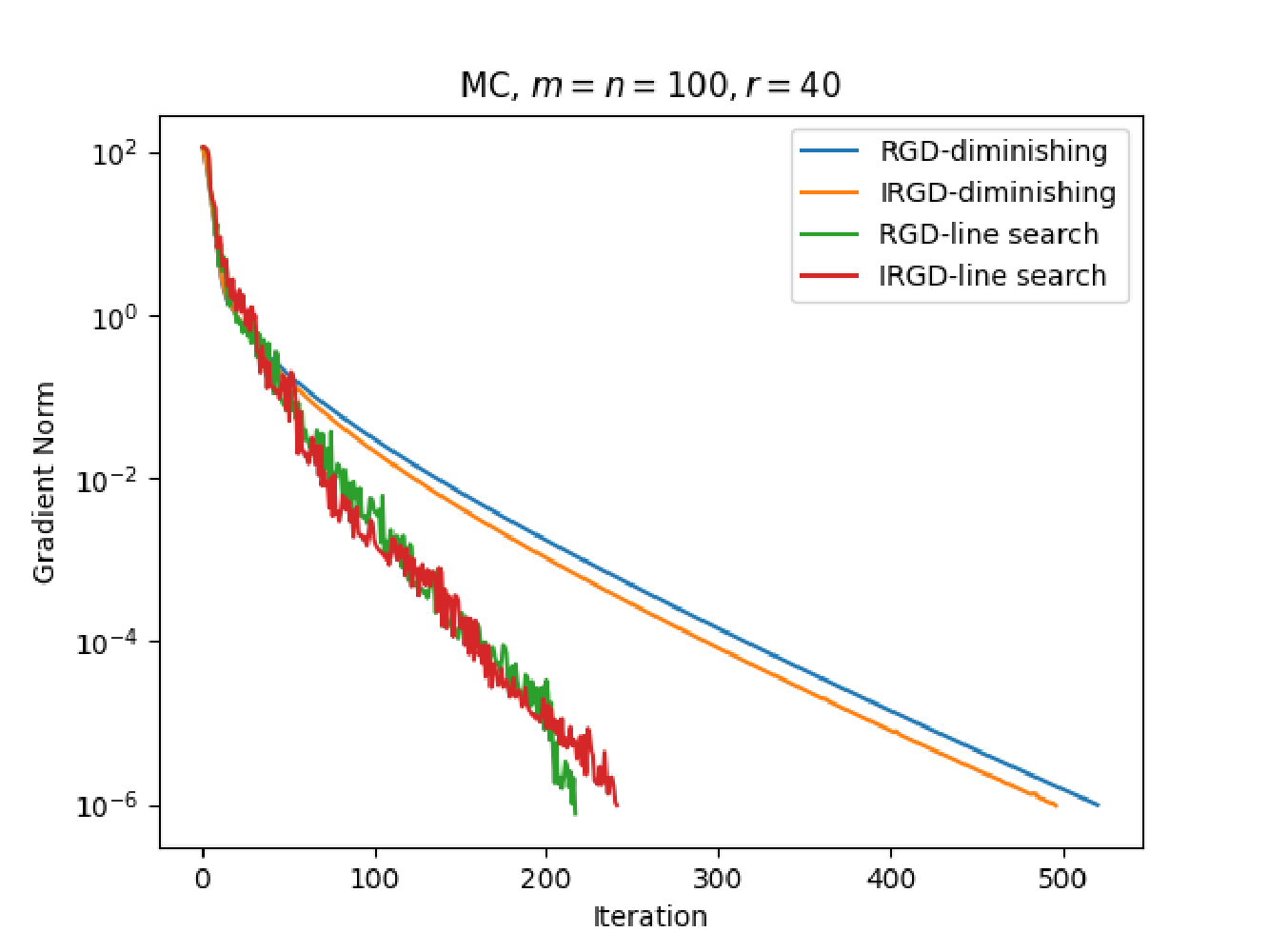}}
\caption{The gradient norm for RGD and IRGD ($\nu=10^{-3}$) on the MC problem.}
\label{fig_mc}
\end{figure}

In Table \ref{tab_mc}, we compare the numerical performance of RGD and IRGD with different selections of the error factor $\nu$. It is evident that the performance of IRGD is slightly better than that of RGD, and the line search-based algorithm generally outperforms the one with a decreasing stepsize. This is consistent with the expectation that RGD can sometimes perform better since it utilizes the exact gradient. Notably, IRGD with $\nu = 10^{-3}$ outperforms RGD in lower dimensions, but shows similar performance to RGD in higher dimensions $m=n=100, r=40$, as depicted in Fig \ref{fig_mc}. These results suggest that IRGD is competitive with RGD, even though the former relies only on inexact gradient information.

\subsection{Principal component analysis}\label{pca}
Principal component analysis (PCA) can be viewed as a Rayleigh-quotient minimization problem on the Grassmann manifold. For $H\in \mathcal{S}_{++}^n$,
$$ \min_{X\in Gr(n,p)} -\frac{1}{2}Tr(X^{\top}HX), $$
where the Grassmann manifold $ Gr(n,p)$ is the set of $p$-dimensional subspaces in $\mathbb{R}^n$ and $\mathcal{S}_{++}^n$ denotes the set of all $n\times n$ symmetric positive definite matrices.
In our experiment, we set different dimension $n\times p$, and the matrix $H$ is generated by $H=AA^{\top}$, where $A\in \mathbb{R}^{n\times p}$ is a randomly generated data matrix. For each run, we sample $m = n\times p$ Gaussian samples for each iteration.

\begin{table}[h]
\centering
\setlength{\tabcolsep}{8pt}
\caption{Numerical results of PCA problem with different $n,p$.}\label{tab_pca}
\begin{tabular}{|c|c|cc|cc|cc|}
\cline{1-8}
\multirow{2}{*}{Solver} & \multirow{2}{*}{$\rho$} & \multicolumn{2}{c|}{$n=20, p=10$} & \multicolumn{2}{c|}{$n=100, p=50$} & \multicolumn{2}{c|}{$n=200, p=100$}
\\ \cline{3-8} 
 &  & Iter & Time & Iter & Time & Iter & Time\\ \cline{1-8} 
 RGD-diminishing & - & 176 & 0.0976 & 1292 & 2.9957 & 3128 & 32.2487 \\\cline{1-2}
\multirow{3}{*}{REG-diminishing}
 & $10^{-8}$ & 191 & 0.1926 & 1292 & 5.6258 & 3205 & 64.2076\\  
 & $10^{-5}$ & 183 & 0.1872 & 1286 & 5.7957 & 3097 & 62.7726\\
 & $10^{-3}$ & 147 & 0.1525 & 744 & 3.5089 & 1698 & 35.1246\\ \cline{1-8}
 RGD-line search & - & 115 & 0.0742 & 749 & 2.1970 & 1214 & 17.0786 \\ \cline{1-2}
\multirow{3}{*}{REG-line search}
 & $10^{-8}$ & 132 & 0.1486 & 825 & 4.1860 & 1132 & 27.0703\\
 & $10^{-5}$ & 125 & 0.1404 & 795 & 3.9235 & 1115 & 27.0823\\
 & $10^{-3}$ & 113 & 0.1283 & 561 & 2.8655 & 402 & 10.8131\\ \cline{1-8}
\end{tabular}
\end{table}

\begin{figure}
\centering  
\subfigure[]{
\includegraphics[width=4.3cm]{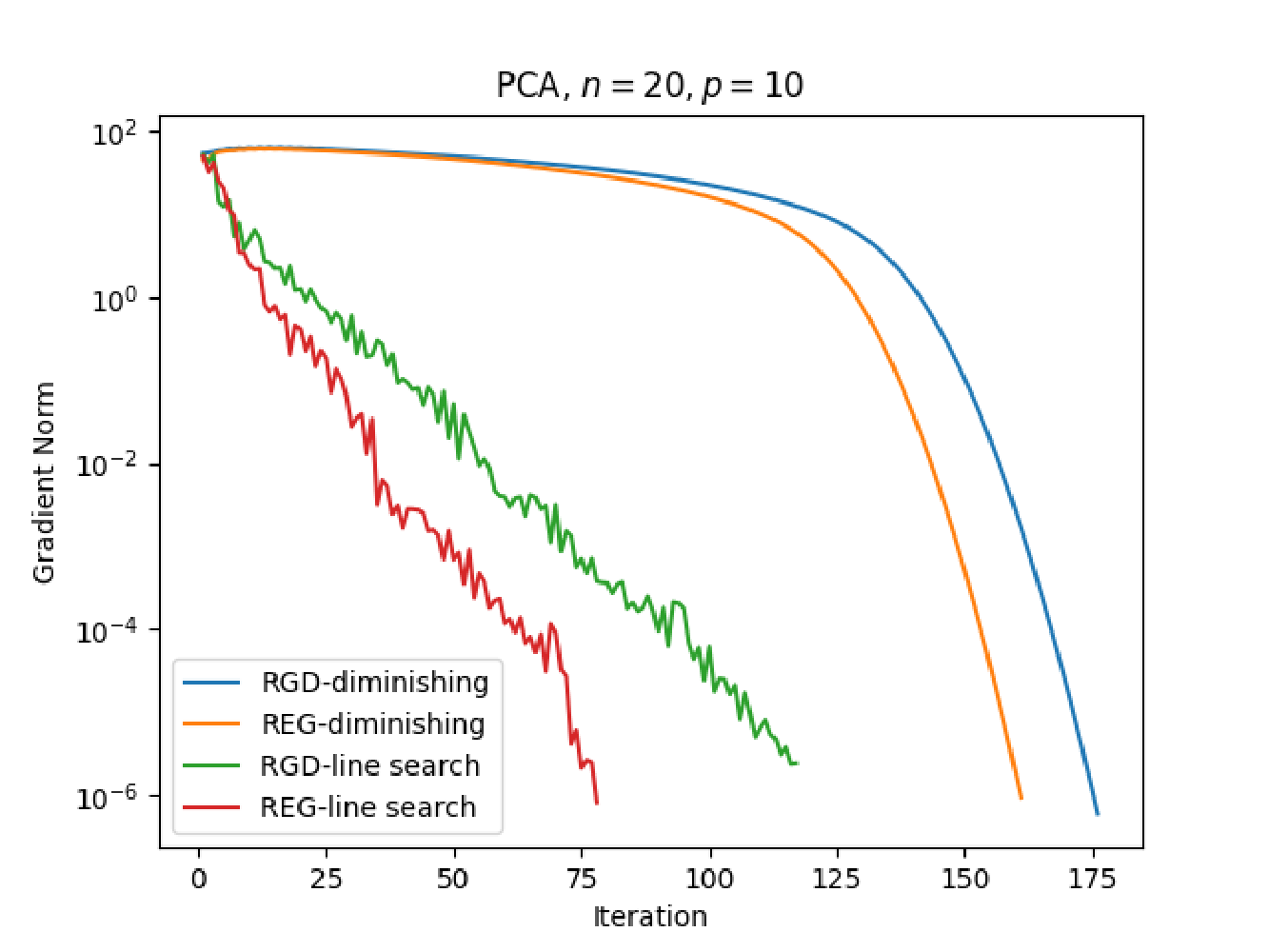}}
\subfigure[]{
\includegraphics[width=4.3cm]{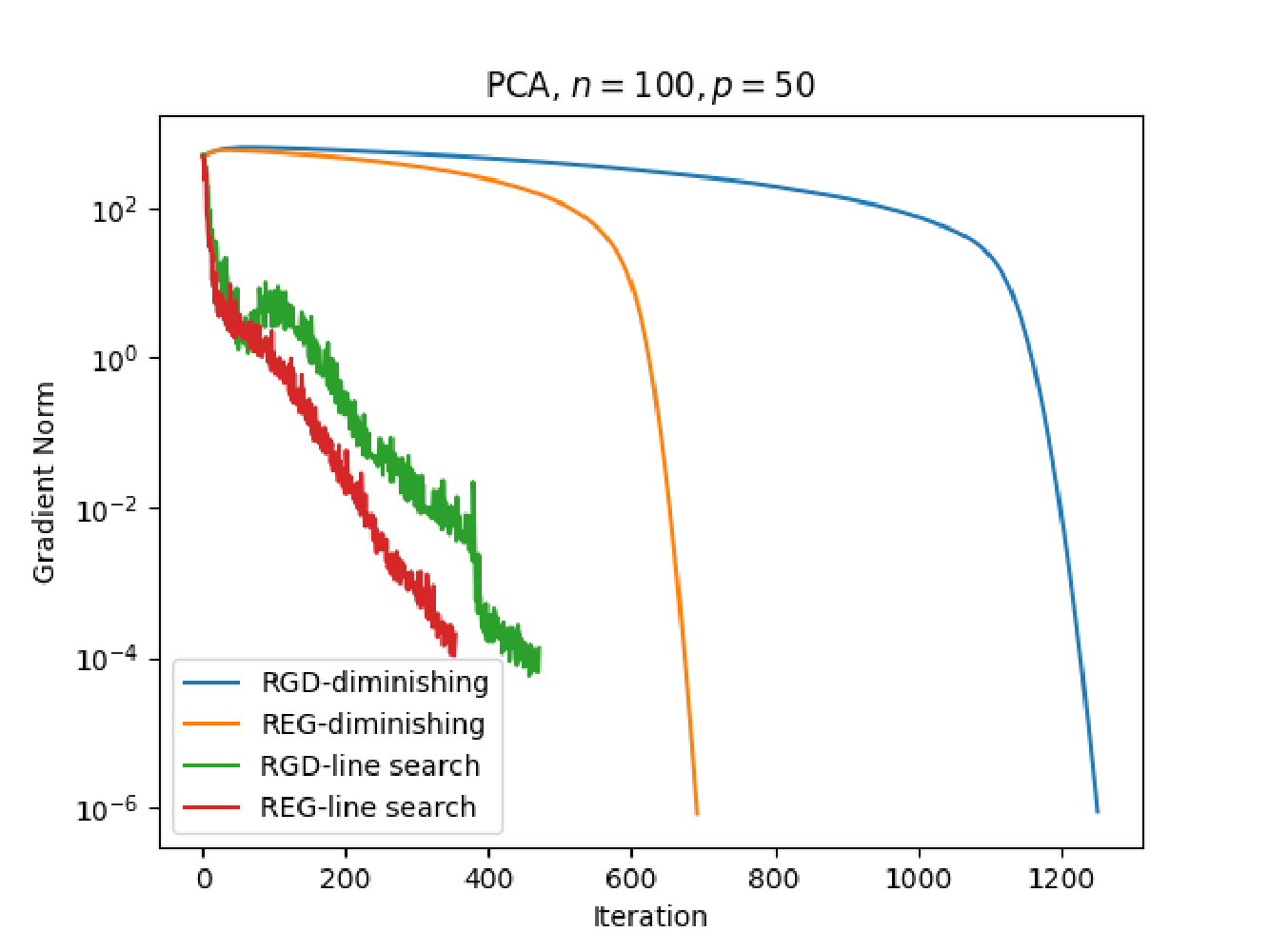}}
\subfigure[]{
\includegraphics[width=4.3cm]{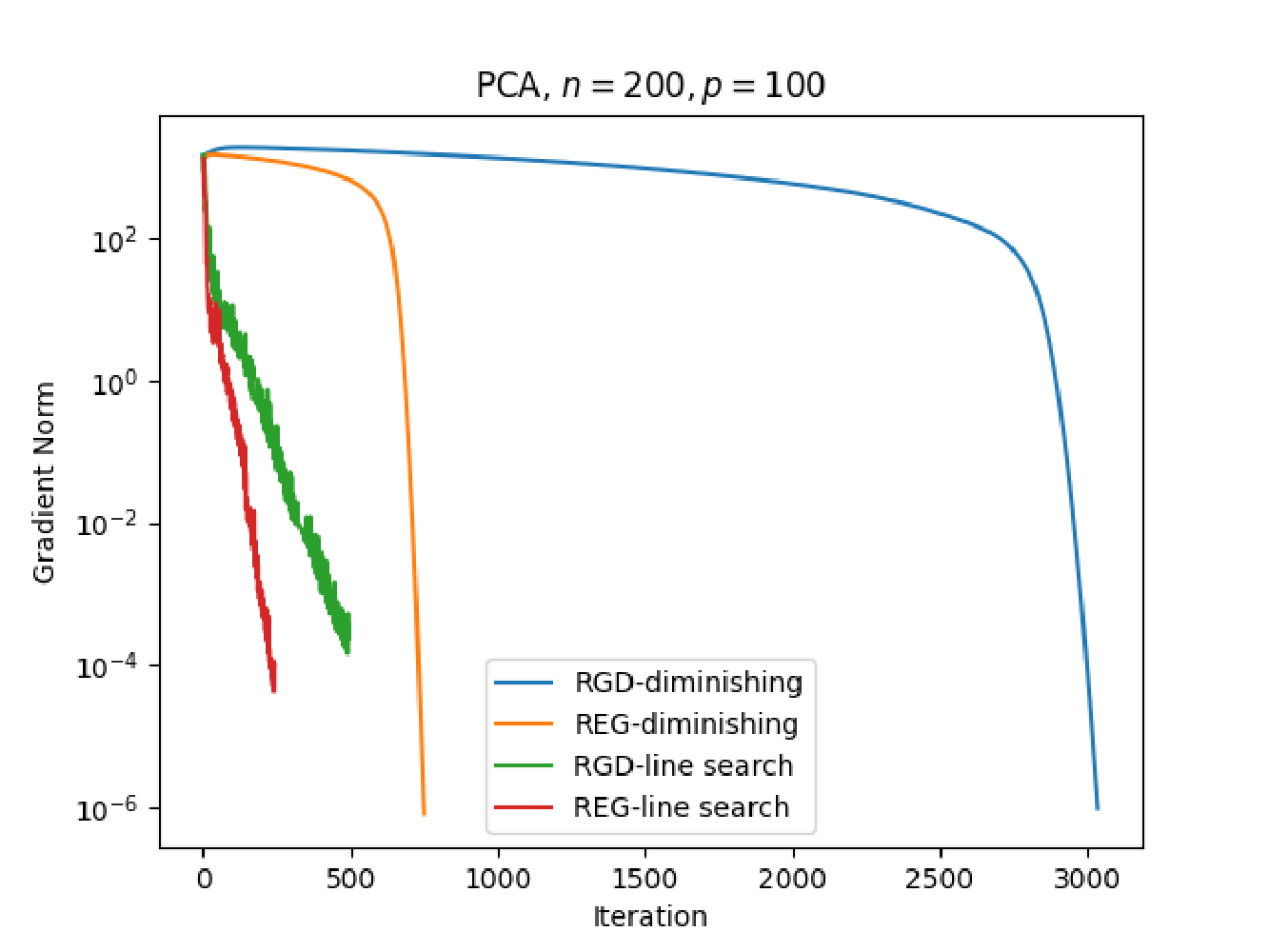}}
\caption{The gradient norm for RGD and REG ($\rho=10^{-3}$) on PCA problem.}
\label{fig_pca}
\end{figure}


\begin{table}[h]
\centering
\setlength{\tabcolsep}{8pt}
\caption{Numerical results of PCA problem with different $p$.}\label{tab_mnist}
\begin{tabular}{|c|c|cc|cc|cc|}
\cline{1-8}
\multirow{2}{*}{Solver} & \multirow{2}{*}{$\rho$} & \multicolumn{2}{c|}{$p=2$} & \multicolumn{2}{c|}{$p=3$} & \multicolumn{2}{c|}{$p=5$}\\ \cline{3-8} 
 &  & Iter & Time & Iter & Time & Iter & Time\\ \cline{1-8} 
 RGD-diminishing & - & 136 & 0.2602 & 90 & 0.1985 & 95 & 0.2739 \\\cline{1-2}
\multirow{4}{*}{REG-diminishing}
 & $10^{-8}$ & 147 & 0.5311 & 90 & 0.3812 & 96 & 0.5373\\
 & $10^{-5}$ & 135 & 0.4924 & 90 & 0.3810 & 110 & 0.6041\\
 & $10^{-3}$ & 143 & 0.5195 & 87 & 0.3559 & 90 & 0.5111\\
 & $10^{-2}$ & 126 & 0.4632 & 53 & 0.2134 & 72 & 0.3939\\ \cline{1-8}
 RGD-line search & - & 88 & 0.2377 & 49 & 0.1443 & 91 & 0.3514 \\ \cline{1-2}
\multirow{4}{*}{REG-line search}
 & $10^{-8}$ & 92 & 0.3876 & 50 & 0.2391 & 88 & 0.5495\\
 & $10^{-5}$ & 96 & 0.3932 & 48 & 0.2313 & 84 & 0.5256\\
 & $10^{-3}$ & 93 & 0.3802 & 45 & 0.2211 & 88 & 0.5545\\
 & $10^{-2}$ & 62 & 0.2893 & 32 & 0.1647 & 66 & 0.4193\\ \cline{1-8}
\end{tabular}
\end{table}

\begin{figure}
\centering  
\subfigure[]{
\includegraphics[width=4.3cm]{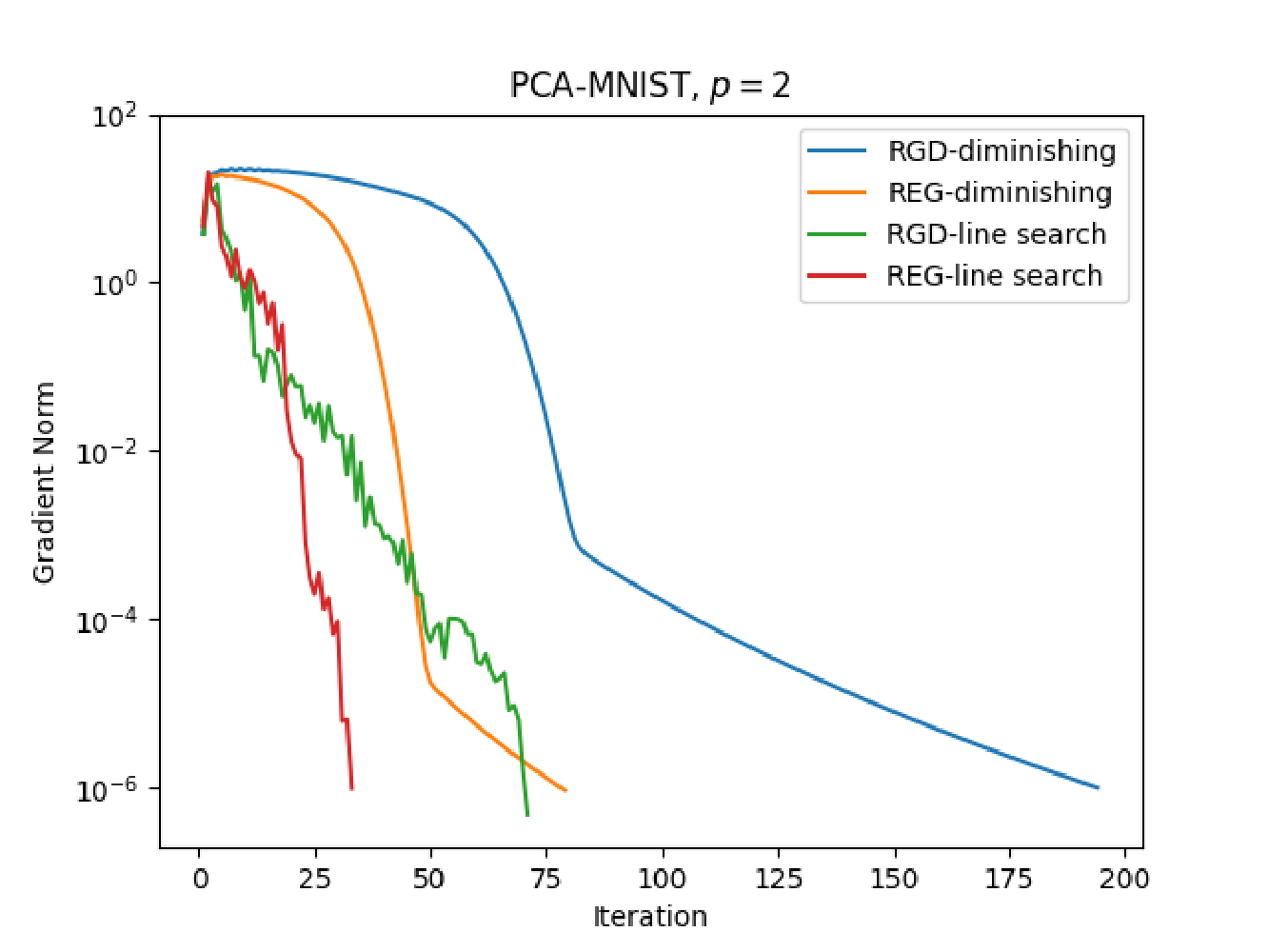}}
\subfigure[]{
\includegraphics[width=4.3cm]{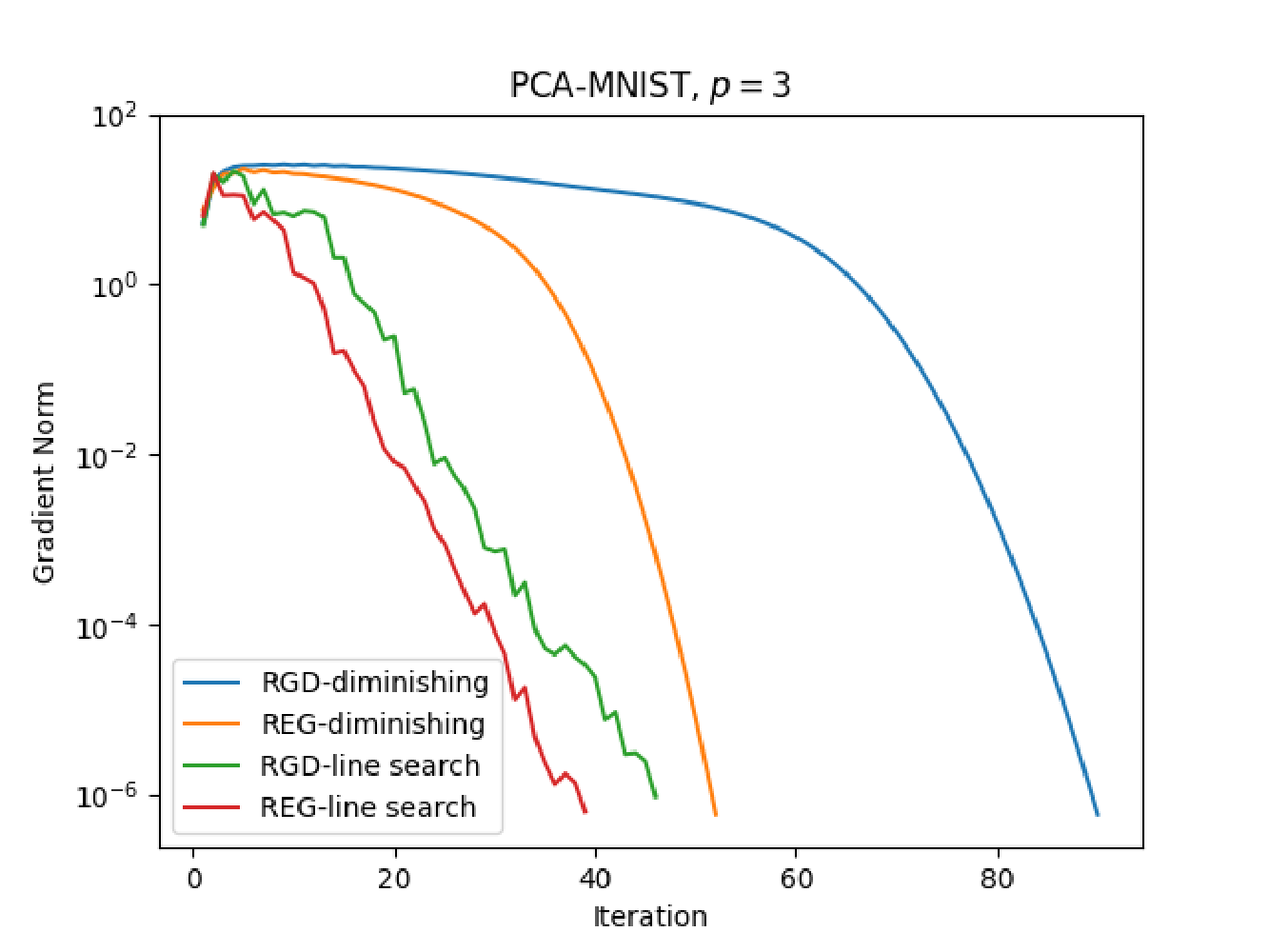}}
\subfigure[]{
\includegraphics[width=4.3cm]{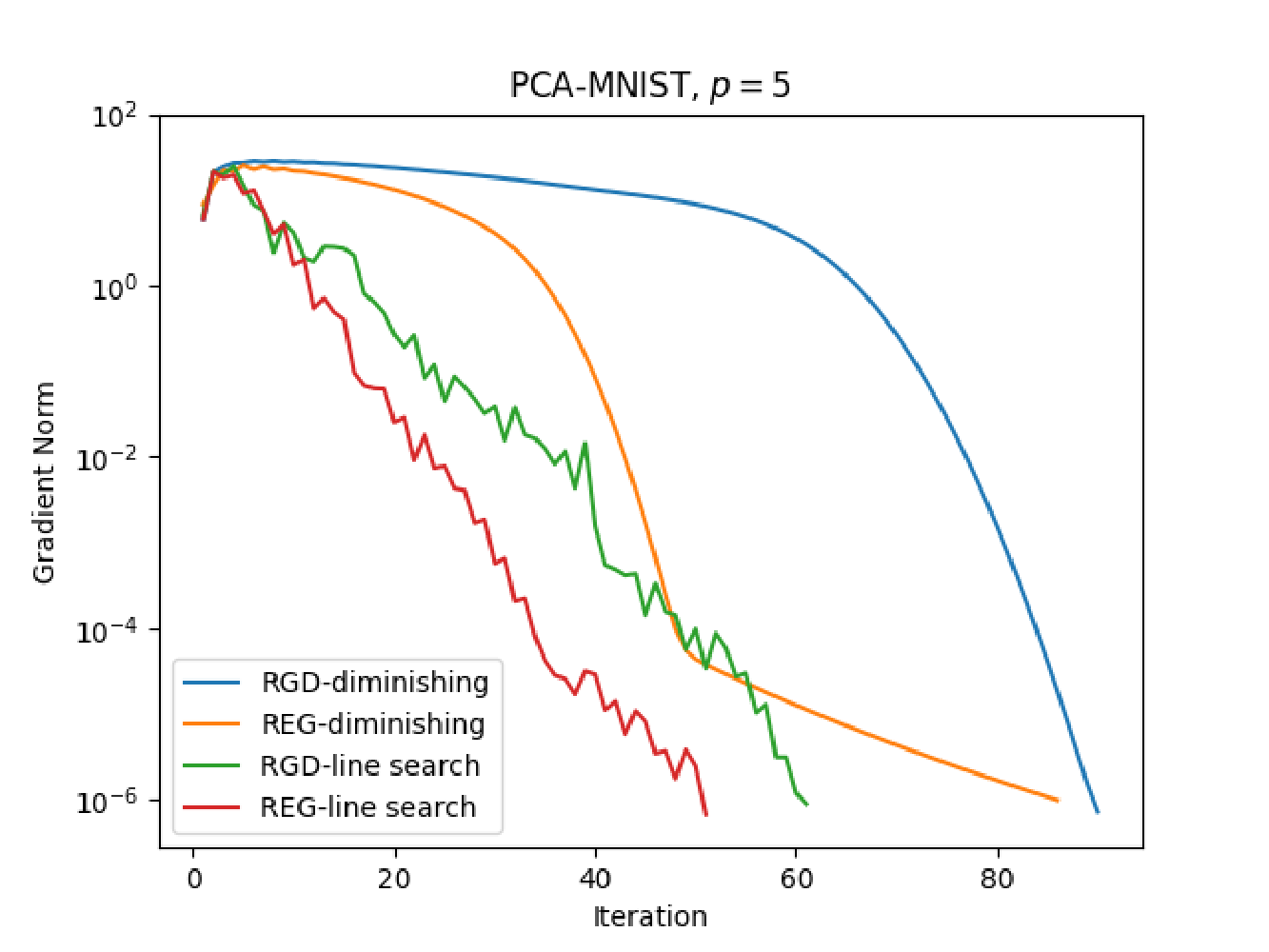}}
\caption{The gradient norm for RGD and REG ($\rho=10^{-2}$) on PCA-MNIST problem.}
\label{fig_mnist}
\end{figure}

From Table \ref{tab_pca}, we compare the performance of RGD and REG using different descent stepsizes $\rho$. The results show that with $\rho=10^{-3}$, REG significantly outperforms RGD, especially when $n=200,p=100$. Additionally, REG based on line search proves to be more effective for PCA problems than using a decreasing stepsize. As illustrated in Fig. \ref{fig_pca}, the gradient norm of REG with $\rho=10^{-3}$ declines at a faster rate than that of RGD. Since the extragradient method takes into account previous gradient information, it can better escape saddle points and local minima, enabling the algorithm to find the global optimal solution more efficiently. These findings highlight that REG is a more effective algorithm overall.

The above conclusion was based on experiments conducted using a randomly generated matrix $A$. To further validate these results, we extended our tests to the real MNIST dataset \cite{lecun1998gradient} to evaluate the performance of the REG algorithm. As shown in Table \ref{tab_mnist}, the performance of REG improves as $\rho$ decreases. For instance, when $\rho=10^{-2}$, Fig. \ref{fig_mnist} illustrates that the gradient descent in REG accelerates more rapidly. These findings further confirm that the inexact gradient algorithm REG effectively overcomes the slow convergence typically observed in traditional gradient methods, demonstrating superior performance.

\section{Conclusion}\label{sec conclusion}
In this paper, we propose a framework for the inexact Riemannian gradient algorithm IRGD and its standardized variants IRGDr, and provide a fundamental convergence analysis of the IRGD based on the Riemannian KL property. Our analysis is conducted in a deterministic setting under standard assumptions. Additionally, we extend our framework to include two applications: RSAM and REG, demonstrating that their convergence results align with the IRGD framework. Numerical experiments on MC and PCA problems validate our analysis and show that IRGD implementations are efficient in practice, with REG performing particularly well.

As a topic for future research, it would be interesting to explore the IRGD framework in a stochastic setting. Additionally, investigating stricter inexact gradient conditions, such as imposing a lower bound on the gradient norm, could be valuable.

\bibliographystyle{siamplain}
\bibliography{ref}



\end{document}